\newcommand{\verteq}{\rotatebox{90}{$\,=$}} 
\pgfplotsset{compat=1.15}
\tikzset{cross/.style={cross out, draw=black, fill=none, minimum size=2*(#1-\pgflinewidth), inner sep=0pt, outer sep=0pt}, cross/.default={2pt}}
\definecolor{pastelyellow}{rgb}{0.99, 0.99, 0.59}
\definecolor{aqua}{rgb}{0.0, 1.0, 1.0} 
\definecolor{aquamarine}{rgb}{0.5, 1.0, 0.83} 
\definecolor{bananayellow}{rgb}{1.0, 0.88, 0.21}
\definecolor{burgundy}{rgb}{0.5, 0.0, 0.13}
\definecolor{ao(english)}{rgb}{0.0, 0.5, 0.0}
\newtheorem{theorem}{Theorem}[section]
\newtheorem{proposition}[theorem]{Proposition}
\newtheorem{corollary}[theorem]{Corollary}
\theoremstyle{definition}
\newtheorem{definition}[theorem]{Definition}
\newtheorem{example}[theorem]{Example}
\newtheorem{remark}[theorem]{Remark}
\def\thmhead@plain#1#2#3{%
	\thmname{#1}\thmnumber{\@ifnotempty{#1}{ }\@upn{#2}}%
	\thmnote{ {\the\thm@notefont#3}}}
\let\thmhead\thmhead@plain
\newcommand{\bd}{\mathbf{d}}
\newcommand{\cC}{\mathcal{C}}
\newcommand{\cD}{\mathcal{D}}
\newcommand{\cE}{\mathcal{E}}
\newcommand{\cF}{\mathcal{F}}
\newcommand{\cG}{\mathcal{G}}
\newcommand{\cU}{\mathcal{U}}
\newcommand{\cV}{\mathcal{V}}
\newcommand{\bbZ}{{\mathbb Z}} 
\newcommand{\bbF}{{\mathbb F}} 
\renewcommand{\geq}{\geqslant}
\renewcommand{\leq}{\leqslant}
\begin{document}

	\renewcommand{\headrulewidth}{0pt}
	
	\rhead{ }
	\chead{\scriptsize Motzkin numbers and flag codes}
	\lhead{ }

	\title{Motzkin numbers and flag codes}

	\author{\renewcommand\thefootnote{\arabic{footnote}}
		Clementa Alonso-Gonz\'alez\footnotemark[1] \ and  Miguel \'Angel Navarro-P\'erez\footnotemark[2]}

	\footnotetext[1]{Dpt.\ de Matem\`atiques, Universitat d'Alacant, 
	Sant Vicent del Raspeig, Ap.\ Correus 99, E -- 03080 Alacant.}
	\footnotetext[2]{Centro Universitario EDEM Escuela de Empresarios,  
	Muelle de la Aduana, s/n -- Valencia }

	{\small \date{\usdate{\today}}} 
	
	\maketitle
	
	\begin{abstract}
	
\emph{Motzkin numbers} have been widely studied since they count many different combinatorial objects. In this paper we present a new appearance of this remarkable sequence in the network coding setting through a particular case of \textit{multishot codes} called \textit{flag codes}. A flag code is a set of sequences of nested subspaces (\emph{flags}) of a vector space over the finite field $\bbF_q$. If the list of dimensions is $(1, \ldots, n-1)$, we speak about a \textit{full flag code}. The \emph{flag distance} is defined as the sum of the respective subspace distances and can be represented by means of the so-called \emph{distance vectors}. We show that the number of distance vectors corresponding to the full flag variety on $\bbF_q^n$ is exactly the $n$-th Motzkin number. Moreover, we can identify the integer sequence that counts the number of possible distance vectors associated to a full flag code with prescribed minimum distance.
	\end{abstract}
	
	\textbf{Keywords:} Motzkin numbers, Motzkin paths, flag codes, flag distance, distance vectors.
	

	\section{Introduction}\label{sec:Introduction}
Network coding is the most efficient way to send data across a network modelled as a directed acyclic multigraph with multiple senders and receivers. The key is that the intermediate nodes can perform random linear combinations of the incoming inputs. This improves considerably the information flow although it can also lead to error propagation and erasures (see \cite{AhlsCai00}). To solve this problem, in \cite{KoetKschi08} the authors consider subspaces of $\mathbb{F}_q^n$ as codewords and \emph{subspace codes} as collections of subspaces. In particular, if all the subspaces have the same dimension, we have \emph{constant dimension codes}. Consult \cite{TrautRosen18} to have an overview of the most representative works in this subject.

On the other side, subspace codes can be  considered as \emph{one-shot subspace codes} since sending a codeword (a subspace) requires just a channel use. This gives rise to the idea, first suggested in \cite{NobUcho2009}, of using the channel several times to get  \emph{multishot subspace codes}. Under this approach codewords are sequences of subspaces. In particular, it is possible to consider \emph{constant type flags}, that is, sequences of nested subspaces with fixed dimensions as it was proposed in \cite{LiebNebeVaz18}.  In this seminal paper, collections of flags are called \emph{flag codes} and they are presented as a generalization of constant dimension codes. The recent works \cite{cotas, CasoPlanar,CasoNoPlanar, Kurz20} deal with different questions related to the parameters and construction of flag codes.

In this paper we focus on the distance parameter associated to full flag codes, that is, those whose sequence of dimensions is $(1,\ldots, n-1)$. As usual in the multishot context, the distance between flags is computed as the sum of the subspace distances corresponding to each shot, which provokes that many different combinations of them can give the same flag distance value. This fact was carefully pointed out in \cite{cotas}, where the authors introduce the notion of \emph{distance vector} (associated to a given distance value) in order to complete the information concerning the distance of pairs of flags. In this framework, it naturally arises the question of knowing in how many different ways the distance between a couple of flags can be distributed. We provide the answer to this question by showing that the cardinality of the set of distance vectors corresponding to the full flag variety on $\bbF_q^n$ is given by the $n$-th Motzkin number. 
	
\textit{Motzkin numbers} have been widely investigated as they appear in a great variety of combinatorial objects (see \cite{Sloane, Stanley} for more details). Here we look at them in terms of certain lattice paths from $(0,0)$ to $(n,0)$ that consist of horizontal steps $(1,0)$, up steps $(1,1)$ and down steps $(1,-1)$ and never goes below the $x$-axis (\textit{Motzkin paths}). Thus, we exhibit a correspondence between distance vectors and Motzkin paths that results to be crucial for our purposes. This idea has been inspired by the combinatorial approach to flag codes developed in \cite{Combinatorial} and some interesting talks with Paulo Almeida and Alessandro Neri. 
	
The remain of the paper is organized as follows: in Section \ref{sec: preliminaries} we remember some basic definitions and results on Motzkin numbers, subspace codes and flag codes. In Section \ref{sec: distance vectors} we recall the concept of \emph{distance vector} associated to a couple of flags and we pose the problem of counting such objects. In Section \ref{sec: the bijection} we establish a bijection between the set of distance vectors associated to the full flag variety on $\bbF_q^n$ and the set of Motzkin paths with $n$ steps which permits to compute the cardinality of the former set as well as the one of some important subsets of it. Moreover, we show that such a bijection takes the flag distance associated to a distance vector to the area under the corresponding Motzkin path. Finally, in Section \ref{sec:conclusions} we propose some open questions that could be addressed taking into account our work.

\section{Preliminaries}\label{sec: preliminaries}
	In this section we briefly recall some basic background on  Motzkin numbers, subspace codes and flag codes.

	\subsection{Motzkin numbers}
Let us consider the set of lattice paths in $\bbZ^2$ whose permitted steps are the up diagonal step $(1,1)$ denoted by $U$, the down diagonal step $(1, -1)$, denoted by $D$, and the horizontal step $(1,0)$, denoted by $H$. Let $U(x,y)$ represent the set of all unrestricted lattice paths running from $(0,0)$ to $(x,y)$ and using the steps $U, D$ and $H$.
\begin{definition}
	A \emph{Motzkin path} of length $n$ is a lattice path in $U(n,0)$ that never runs below the $x$-axis. We denote by $\mathcal{M}_n$  the set of Motzkin paths of length $n$. A \emph{Dyck path} is a Motzkin path that does not contain horizontal steps. The set of Dyck paths of length $2n$ will be denoted by $\mathcal{D}_n$.
\end{definition}
It is clear that, if horizontal steps are not allowed, the length of a Motzkin path (a Dyck path, in this case) must be even. On the other hand, it is well known that the cardinality of the set $\mathcal{M}_n$ is the $n$-th Motzkin number $M_n$ (sequence $A001006$ in \cite{Sloane}) and  the cardinality of $\mathcal{D}_n$ is the $n$-th Catalan number $C_n$ (sequence $A000108$ in \cite{Sloane}). More precisely, we can compute $M_0=C_0=1$ and
$$M_n=M_{n-1}+\sum_{k=0}^{n-2}M_k M_{n-k-2}, \quad C_n=\sum_{k=0}^{n-1}C_k C_{n-k-1} \quad (n\geq 1). $$

\noindent  The Motzkin numbers sequence, whose first ten terms are 
 $$1, 1, 2, 4, 9, 21, 51, 127, 323, 835, $$
  was introduced by Theodor Motzkin \cite{Motzkin}, while counting possible sets of nonintersecting chords joining some of $n$ points on a circle. Since this seminal paper it has been the subject of numerous studies over the last forty years. See \cite{Aigner1998} and \cite{DonaSha1977} for a couple of the earliest surveys where the authors exhibit different combinatorial objects counted by this sequence, including Motzkin paths. 

A commonly used and concise way of representing a Motzkin path $p \in \mathcal{M}_n$ is by its corresponding Motzkin word (see \cite{Dukes}, for instance).
\begin{definition}
 A \textit{Motzkin word} of length $n$  is  a sequence of $n$ letters in the alphabet $\{U,D, H\}$, namely $p=p_1p_2 \dots p_n$,  with the constraint that the number of occurrences of the letter $U$ is equal to the number of occurrences of the letter $D$ and, for every $i \in \{1, \ldots, n\}$, the number of occurrences of $U$ in
the subword $p_1p_2 \dots p_i$ is not smaller than the one of $D$.
\end{definition}
 In the following, we will not distinguish between a Motzkin path and the corresponding Motzkin word. 
 	\begin{figure}[h]
 	\begin{center}
 		\includegraphics[scale=0.75]{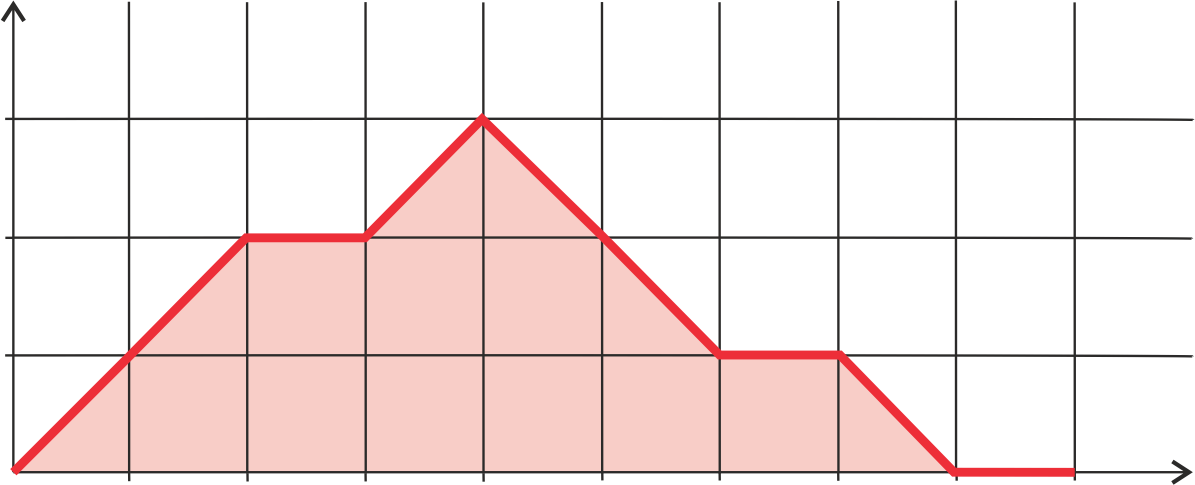}
 		\caption{The Motzkin path $p=UUHUDDHDH$ in $\mathcal{M}_9$.}
 		\label{Fig:Motzkin path 1}
 	\end{center}
 \end{figure}
\begin{remark}\label{rem: parenthesis}
	Observe that if we see the letters $U$ and $D$ as an open and closed parenthesis respectively, a Motzkin path is a word in $\{U,H, D\}$ such that the subword on the letters $U$ and $D$ forms a \textit{balanced parenthesization}, that is, the pairs of parantheses on them are correctly matched. This viewpoint will be useful for us in Section \ref{sec: the bijection}.
\end{remark}

\begin{example}
		To the Motzkin path  $p=UUHUDDHDH$ in Figure \ref{Fig:Motzkin path 1} it corresponds the parenthesization $((()))$.	
\end{example}

\begin{definition}
	The area of a Motzkin path $p$, denoted by $A(p)$, is the area of the region above the $x$-axis and below the path $p$.
\end{definition}
\begin{remark}
For the Motzkin path in Figure \ref{Fig:Motzkin path 1} we have $A(p)=12$.  It is clear that for any $p \in \mathcal{M}_n$, it holds $0 \leq A(p) \leq \lfloor \frac{n^4}{4} \rfloor$. In fact, if we denote by $\mathcal{M}_n(k)$ the set of Motzkin paths with length $n$ and area $k $ with $0 \leq k  \leq \lfloor \frac{n^4}{4} \rfloor$, the cardinality of that set, denoted $T(n,k)$, is given by the sequence $ A129181 $ in \cite{Sloane}. As far as we know, at the moment there is not a recurrence formula to calculate this sequence. We present here a table that covers it for values $n \in \{0,1, \dots, 8\}$.
\end{remark}

	\begin{table}[H]\label{Tab: T(n,k)}
		\centering
		\begin{footnotesize}
			\begin{tabular}{c|p{0.3cm}p{0.3cm}p{0.3cm}p{0.2cm}p{0.3cm}p{0.2cm}p{0.3cm}p{0.2cm}p{0.2cm}p{0.3cm}p{0.2cm}p{0.2cm}p{0.3cm}p{0.2cm}p{0.2cm}p{0.2cm}p{0.3cm}}
				\hline
				  \backslashbox{$n$}{$k$} & $\textbf{0}$       & $\textbf{1}$ &  $\textbf{2}$ &  $3$ & $\textbf{4}$ & $5$ & $\textbf{6}$ & $7$ & $8$ & $\textbf{9}$ & $10$ & $11$ & $\textbf{12}$ & $13$ & $14$ & $15$ & $\textbf{16}$    \\ 
				\hline
				0               & $1$         &  &      &      &     &     &     &     &     &     &      &      &      &      &      &      &     \\ 
				1                & $1$         &  &      &      &     &     &     &     &     &     &      &      &      &      &      &      &     \\ 
			    2                  & $1$         & $1$ &      &      &     &     &     &     &     &     &      &      &      &      &      &      &     \\ 
			    3                  & $1$         & $2$ &   $1$   &      &     &     &     &     &     &     &      &      &      &      &      &      &     \\ 
			    4                  & $1$         & $3$ &   $3$   &   $1$    &   $1$    &    &     &     &     &     &      &      &      &      &   &  &     \\
			    5                  & $1$         & $4$ &   $6$   &   $4$    &   $3$    & $2$    &  $1$   &     &     &     &   &      &   &    &   &  &     \\
			    6                  & $1$         & $5$ &   $10$   &   $10$    &   $8$    & $7$   & $5$    & $3$    & $1$    & $1$  &  &   &   &  &   &  &     \\ 
			    7               & $1$    & $6$ &   $15$   &   $20$    &   $19$    & $18$   & $16$    & $12$    & $8$    & $6$  & $3$  & $2$  & $1$&  &   &  &     \\ 
			    8  & $1$    & $7$ &   $21$   &   $35$    &   $40$    & $41$   & $41$  & $36$  & $29$  & $23$  & $18$  & $12$  & $9$& $5$  & $3$   & $1$ & $1$     \\  
				\hline
			\end{tabular}
		\end{footnotesize}
		\caption{First terms of the sequence $A129181$.  } 
	\end{table}

Of special interest is the set of Motzkin paths that never touch the $x$-axis in intermediate steps or the set of Motzkin paths that do no have horizontal steps on the $x$-axis .
\begin{definition}
A \textit{return} of a Motzkin path $p \in \mathcal{M}_n$ is a point of $p$ different from $(0, 0)$ and $(n,0)$ and belonging to the $x$-axis. We say that $p$ is an \textit{irreducible} or\textit{ elevated} Motzkin path if it does not have returns. We denote by $\mathcal{E}_n$ the set of elevated Motzkin paths in $\mathcal{M}_n$. Its cardinality is the integer $E_n$ given by
\begin{equation}\label{eq: elevated Motzkin numbers}
	E_0=E_1=0,  \quad E_n=M_{n-2}, \quad n\geq 2.
\end{equation}

\end{definition}

\begin{example}
	The lattice path $p=UUHUDDHD$ is an elevated Motzkin path in $\mathcal{E}_8$. 
\end{example}

In terms of its Motzkin word, a Motzkin path is elevated if the number of occurrences of $U$ is greater than the number of occurrences of $D$ in every subword $p_1\dots p_i$, for $1\leq i\leq n-1$. On the other hand, a Motzkin path with returns contains at least a subword of the form either $DU$ or $DH$.

\begin{definition}
A Motzkin path of length $n$ without horizontal steps on the $x$-axis will be called a \textit{Riordan path} of length $n$. The set of Riordan paths of length $n$ is denoted by  $\mathcal{R}_n$. The cardinality of $\mathcal{R}_n$ is given by the \textit{Riordan numbers} $R_n$. In \cite{Sloane} these numbers correspond to the sequence $A005043$.
\end{definition}

\begin{remark}
In the literature related to Motzkin paths there are many papers that deal with the problem of counting variations of them in terms of the appearance of different elements such as returns, horizontal steps, peaks (subwords of the form $UD$) and valleys (subwords of the form $DU$), for instance. In Section \ref{sec: the bijection} we will consider some of them (elevated Motzkin paths and Riordan paths) as translation of particular situations coming from the context of flag codes.  
\end{remark}

	\subsection{Subspace codes and flag codes}
	
Consider a prime power  $q$ and a positive integer $n\geq 2$. Let $\bbF_q$ be the finite field with $q$ elements. The set of subspaces of the vector space $\bbF_q^n$ can be seen as a metric space by using different metrics. Among them, here we will use the so-called \emph {injection distance.}
	\begin{definition}
		The \emph{injection distance} between two subspaces $\cU, \cV \subseteq \bbF_q^n$ is defined as 
		\begin{equation}\label{eq: subspace distance}
			d_I(\cU, \cV)= \max\{\dim(\cU), \dim(\cV)\}-\dim(\cU\cap \cV). 
		\end{equation}	
		In particular, if $\cU, \cV$ have the same dimension, say $1 \leq k <n$, then we have
		\begin{equation}\label{def: subspace distance in the Grassmannian}
			d_I(\cU, \cV)= k-\dim(\cU\cap \cV)=\dim(\cU+ \cV)-k.
		\end{equation}
	\end{definition}
Hence, if for any $1 \leq k < n$ we denote by $\cG_q(k, n)$ the \emph{Grassmannian}, that is,  the set of $k$-dimensional subspaces of $\bbF_q^n$, we can consider error-correcting codes in  $\cG_q(k, n)$ as follows.
	\begin{definition}
		A \emph{constant dimension code}  $\cC$ of length $n$ and dimension $k$ is a nonempty subset of $\cG_q(k, n)$. The \emph{minimum distance} of $\cC$ is defined as 
		$$
		d_I(\cC)=\min\{ d_I(\cU, \cV) \ | \ \cU, \cV \in \cC, \ \cU \neq \cV \}
		$$	
		whenever $|\cC| \geq 2$. In case $|\cC|=1$, we put $	d_I(\cC)=0$.
	\end{definition}
Note that the minimum distance of a code $\cC$ is an integer such that
\begin{equation}\label{eq: bound subspace distance}
	0\leq d_I(\cC)\leq \left\lbrace
	\begin{array}{lll}
		k     & \text{if} & 2k\leq n,\\
		n-k & \text{if} & 2k\geq n.
	\end{array}
	\right.
\end{equation}
	
	\begin{remark}
		Another frequent metric used when working with subspace codes of $\bbF_q^n$ is the \emph{subspace distance}. It is given by
		\begin{equation}\label{eq: subspace distance ds}
			d_S(\cU, \cV)=\dim(\cU+\cV)-\dim(\cU\cap\cV). 
		\end{equation}
		Observe that, if $\cU, \cV\in\cG_q(k, n)$, then $d_S(\cU, \cV)=2(k-\dim(\cU\cap\cV))=2d_I(\cU, \cV).$ Hence, in the context of constant dimension codes, the injection distance and the subspace distance are equivalent metrics. Consult \cite{TrautRosen18} and the references therein for more information on this class of codes.
	\end{remark}
	
	Let us now consider flags on $\bbF_q^n$ with the same type, that is, sequences of nested subspaces of $\bbF_q^n$ whose list of dimensions is fixed. In this way we get \textit{flag codes (of constant type)}. This idea was first proposed in \cite{LiebNebeVaz18}. Let us recall the basics on this family of codes.
	
	\begin{definition}
		A {\em flag} $\mathcal{F}=(\mathcal{F}_1,\ldots,  \mathcal{F}_r)$ on $\mathbb{F}_{q}^n$ is a sequence of $\bbF_q$-vector subspaces of $\mathbb{F}_{q}^n$ satisfying
		$$
		\{0\}\subsetneq \mathcal{F}_1 \subsetneq \cdots \subsetneq \mathcal{F}_r \subsetneq \mathbb{F}_q^n.
		$$
		The type of $\mathcal{F}$ is the vector $(\dim(\cF_1), \dots, \dim(\cF_r))$. If it equals $(1, 2, \ldots, n-1),$ we say that $\cF$ is a {\em full flag}. We say that $\mathcal{F}_i$ is the {\em $i$-th subspace} of $\cF$.
	\end{definition}

The set of all the flags on $\mathbb{F}_{q}^n$ of a fixed type vector $(t_1, \dots, t_r)$ is said to be the \emph{flag variety} $\mathcal{F}_q( (t_1, \ldots, t_r),n)$ or the \emph{full flag variety} $\mathcal{F}_q(n)$,  if the type vector is $(1, \dots, n-1)$. The distance defined in (\ref{eq: subspace distance}) can be extended to the flag variety as follows: given two flags $\cF=(\mathcal{F}_1,\dots,  \mathcal{F}_r)$ and $\cF'=(\mathcal{F}'_1, \dots,  \mathcal{F}'_r)$ in $\mathcal{F}_q( (t_1, \ldots, t_r),n)$, the \emph{(injection) flag distance} between them is the value
	\begin{equation}\label{eq: flag distance}
		d_f(\cF,\cF')= \sum_{i=1}^r d_I(\mathcal{F}_i, \mathcal{F}'_i).  
	\end{equation}
	\begin{remark}
The subspace distance $d_S$ defined in (\ref{eq: subspace distance ds}) can also be extended to the flag variety. Given $\cF$ and $\cF'$ as above, the sum of subspace distances
		\begin{equation}\label{eq:flag distance}
		\sum_{i=1}^{r} d_S(\cF_i, \cF'_i) = 2 d_f(\cF, \cF') 
		\end{equation}
		is an equivalent distance to $d_f$. In this paper we will always work with the injection flag distance and we will simply write \textit{flag distance}.
	\end{remark}	
	
	\begin{definition}
		A \emph{flag code} of type $(t_1, \dots, t_r)$ on $\bbF_{q}^n$ is a nonempty subset $\cC\subseteq \cF_q((t_1, \dots, t_r), n)$. Its {\em minimum distance} is given by
		$$
		d_f(\cC)=\min\{d_f(\cF,\cF')\ |\ \cF,\cF'\in\cC, \ \cF\neq \cF'\}
		$$
		when $|\cC| \geq 2$. If $|\cC|=1$, we put $d_f(\cC)=0$.
	\end{definition}		
		For each dimension $t_i$ in the type vector of a flag code $\cC$, we can consider the constant dimension code in the Grassmannian $\cG_q(t_i, n)$ consisting of the set of the $i$-th subspaces of flags in $\cC$. This set is called the \emph{$i$-projected code} of $\cC$ and we denote it by $\cC_i$. It is clear that  $\vert \cC_i\vert \leq \vert \cC \vert$ for every $i=1, \dots, r$.  Moreover, if given an index  $1 \leq i \leq r$ we have that  $\vert \cC_i \vert = \vert \cC \vert$, we can deduce that for any couple of flags $\cF, \cF' \in \cC$  we have that $\cF_i \neq \cF_i'$. This fact leads to the following definition.
		
\begin{definition}
We say that two flags $\cF$ and $\cF'$ of type $(t_1, \dots, t_r)$ on $\bbF_{q}^n$ \emph{collapse} at their $i$-th subspace if $\cF_i=\cF'_i$. On the other hand, two different flags $\cF$ and $\cF'$ are said to be \emph{disjoint} if they do not collapse at any subspace, i.e., if $\cF_i\neq\cF'_i,$ for every $1\leq i\leq r$. Similarly, a flag code $\cC\subseteq \cF_q((t_1, \dots, t_r), n)$ is \textit{disjoint} if it consists of disjoint flags. In terms of the projected codes, a flag code $\cC$ is disjoint if, and only if, $|\cC_1|=\dots=|\cC_r|=|\cC|$.
\end{definition}

\section{Distance vectors: how to spread the flag distance}\label{sec: distance vectors}
In sight of the flag distance definition given in (\ref{eq: flag distance}), one realizes that a fixed distance value could possibly be obtained by adding different subspace distances combinations. This fact is reflected in the following example.
	\begin{example}\label{ex: comparing flag distance with subspace distances}
	Let $\{ e_1, e_2, e_3, e_4, e_5, e_6\}$ be the standard basis of the $\bbF_q$-vector space $\bbF_q^6$. Consider the flag code $\cC$ of type $(1,3,5)$ on $\bbF_q^6$ given by the set of flags:
	$$
	\begin{array}{ccc}
		\cF^1 & = & ( \langle e_1 \rangle, \langle e_1, e_2, e_3 \rangle, \langle e_1, e_2, e_3, e_4, e_5 \rangle),\\
		\cF^2 & = & ( \langle e_5 \rangle, \langle e_4, e_5, e_6 \rangle, \langle e_1, e_2, e_4, e_5, e_6 \rangle),\\
		\cF^3 & = & ( \langle e_6 \rangle, \langle e_4, e_5, e_6 \rangle, \langle e_2, e_3, e_4, e_5, e_6 \rangle), \\
		\cF^4 & = & ( \langle e_2 \rangle, \langle e_2, e_5, e_6 \rangle, \langle e_2, e_3, e_4, e_5, e_6 \rangle).
		
	\end{array}
	$$
\noindent Observe that it holds:
	$$
	d_f(\cC)=d_f(\cF^2, \cF^3)=1+0+1=2= 1 + 1 + 0 = d_f(\cF^3, \cF^4).
	$$
\end{example}
We can follow that, even if the pairs $\cF^2, \cF^3$ and $\cF^3, \cF^4$ are both at distance $2$, the first one presents a collapse in the second subspace, that is, $\cF_2^2=\cF_2^3$, whereas the second pair collapses at the third one. As a consequence, to totally capture the relative position of two flags, it is necessary to provide more precise information beyond the distance in absolute terms. In \cite{cotas} the authors deal with this question by defining \emph{distance vectors}. Let us recall this definition.
\begin{definition}\label{def: distance vector associated to a pair of flags}
	Given two different flags $\cF, \cF'$ of type $t=(t_1, \dots,t_r)$ on $\bbF_q^n$, their associated \emph{distance vector} is
	$$
	\textbf{d}(\cF, \cF')=(d_I(\cF_1, \cF_1'), \dots, d_I(\cF_r, \cF_r')) \in \bbZ^r.
	$$
\end{definition}
Notice that the sum of the components of $\textbf{d}(\cF, \cF')$ is the flag distance $d_f(\cF, \cF')$ defined in (\ref{eq: flag distance}). Given a positive integer $n\geq 2$ and a type vector $t=(t_1, \dots,t_r)$, we denote by $D^{(t,n)}$ the maximum possible value of the flag distance in $\cF_q(t,n)$ that, as a consequence of (\ref{eq: bound subspace distance}), is
\begin{equation}\label{eq: D^(type,n)}
	D^{(t,n)}=\left(\sum_{t_i \leq \left\lfloor \frac{n}{2}\right\rfloor} t_i + \sum_{t_i > \left\lfloor \frac{n}{2}\right\rfloor} (n-t_i)\right).
\end{equation}
In particular,  the set $\{0,1,\dots,  D^{(t,n)}\}$ contains all the possible values for the flag distance in $\cF_q(t,n)$. When working with the full type vector, we simply write
\begin{equation}\label{eq: D^n}
	D^n= \left\lfloor \frac{n^2}{4}  \right\rfloor 
	=\left\lbrace
	\begin{array}{cccl}
		\frac{n^2}{4}   & \text{if} & n &  \text{is even}, \\
		& & & \\ [-1em]
		\frac{n^2-1}{4} & \text{if} & n &  \text{is odd}
	\end{array}
	\right.
\end{equation} 
to denote the maximum possible distance between full flags on $\bbF_q^n$.

\begin{definition}
	Let $d$ be an  integer such that $0\leq d\leq D^{(t,n)}$. We define \emph{the set of distance vectors associated to $d$ for the flag variety $\cF_q(t, n)$} as 
	$$
	\mathcal{D}(d, t, n) = \{ \textbf{d}(\cF, \cF') \ | \  \cF, \cF'\in\cF_q(t, n), \ d_f(\cF, \cF')= d  \} \subseteq \bbZ^r.
	$$
	On the other hand, the set of \emph{distance vectors for the flag variety $\cF_q(t, n)$} is 
	$$
	\mathcal{D}(t, n) = \{ \textbf{d}(\cF, \cF') \ | \ \cF, \cF'\in\cF_q(t, n)\} \subseteq \bbZ^r.
	$$
	and it holds
	$$
	\mathcal{D}(t, n) = \bigcup_{d} \mathcal{D}(d, t, n),
	$$
	where $d$ takes all the integers between $0\leq d\leq D^{(t, n)}$. When working with the full flag variety, we drop the type vector and simply write  $\mathcal{D}(d, n)$ and $\mathcal{D}(n),$ respectively.
\end{definition}
At this point we propose several important questions: how many possible distance vectors could correspond to a given couple of arbitrary flags $\cF$, $\cF'$ on $\bbF_q^n$, that is, what is the cardinality of $\mathcal{D}(n)$? In particular, what happens if these flags do not  collapse? What if they never share two consecutive subspaces? What if $\cF$, $\cF'$ belong to a flag code with prescribed minimum distance? We address these questions in the following section.

\section{The bijection} \label{sec: the bijection}
	
In the remain of the paper we will always work with full flag codes. More precisely, we will show that the number of distance vectors associated to the full flag variety $\cF_q(n)$ is given by the Motzkin number $M_n$. 

Let us first recall those properties that characterize distance vectors. The following result is based on \cite[Th. 3.8]{Combinatorial} and \cite[Th. 3.9]{cotas}. For completeness here we include a shorter proof adapted to the full type case.

\begin{theorem}\label{prop: allowed pattern}
	  Consider integers $\delta_1, \dots, \delta_{n-1}\geq 0$ and put $\delta_0=\delta_n=0$. 
	 Then $(\delta_1, \dots, \delta_{n-1})$ is a distance vector in $\cD(n)$ if, and only if,
\begin{equation}\label{eq: dist vect char}
	 \delta_{i} \in \{\delta_{i-1} -1, \delta_{i-1}, \delta_{i-1}+1\},
\end{equation}		 
	 for all $1\leq i\leq n$.   
\end{theorem}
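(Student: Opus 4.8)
The plan is to prove both implications by passing to the auxiliary sequence $a_i:=i-\delta_i$. In the realized case, where $\delta_i=d_I(\mathcal{F}_i,\mathcal{F}'_i)$ and $\dim\mathcal{F}_i=\dim\mathcal{F}'_i=i$, one has $a_i=\dim(\mathcal{F}_i\cap\mathcal{F}'_i)$; and with the conventions $\mathcal{F}_0=\mathcal{F}'_0=\{0\}$, $\mathcal{F}_n=\mathcal{F}'_n=\bbF_q^n$ the boundary values become $a_0=0$, $a_n=n$. Since $a_i-a_{i-1}=1-(\delta_i-\delta_{i-1})$, the condition (\ref{eq: dist vect char}) is equivalent to $a_i-a_{i-1}\in\{0,1,2\}$ for all $1\le i\le n$, so the statement amounts to saying that these are exactly the sequences of intersection dimensions realizable by a pair of full flags on $\bbF_q^n$.

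For the ``only if'' direction I would simply verify, for a given pair $\mathcal{F},\mathcal{F}'$ and each $i$, that $0\le a_i-a_{i-1}\le 2$. The lower bound is immediate from $\mathcal{F}_{i-1}\cap\mathcal{F}'_{i-1}\subseteq\mathcal{F}_i\cap\mathcal{F}'_i$. For the upper bound I would set $W:=\mathcal{F}_i\cap\mathcal{F}'_i$ and note that $\mathcal{F}_{i-1}$ (resp.\ $\mathcal{F}'_{i-1}$), being of codimension $1$ in $\mathcal{F}_i$ (resp.\ $\mathcal{F}'_i$), meets $W$ in a subspace of codimension at most $1$ in $W$; since $\mathcal{F}_{i-1}\cap\mathcal{F}'_{i-1}\subseteq W$ and hence equals $(W\cap\mathcal{F}_{i-1})\cap(W\cap\mathcal{F}'_{i-1})$, the Grassmann dimension formula applied inside $W$ gives $a_{i-1}\ge(\dim W-1)+(\dim W-1)-\dim W=a_i-2$. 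Translating back through $\delta_i=i-a_i$ yields $\delta_i-\delta_{i-1}\in\{-1,0,1\}$.

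For the ``if'' direction I would exhibit an explicit pair of flags. Given $(\delta_1,\dots,\delta_{n-1})$ satisfying (\ref{eq: dist vect char}), I would first encode the increments as a word $p=p_1\cdots p_n$ over $\{U,H,D\}$, with $p_i$ equal to $U$, $H$ or $D$ according as $\delta_i-\delta_{i-1}$ equals $+1$, $0$ or $-1$; since the total of the increments is $\delta_n-\delta_0=0$ and every partial sum of them equals the corresponding $\delta_j\ge0$, this is a Motzkin word. Reading $U$ and $D$ as matched parentheses (Remark \ref{rem: parenthesis}), let $\sigma\in S_n$ be the involution that transposes $i$ and $j$ for each matched pair ($U$ at position $i$, $D$ at position $j>i$) and fixes every $H$-position; the matched pairs are pairwise disjoint, so $\sigma$ is well defined. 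I would then take $\mathcal{F}$ to be the standard coordinate flag $\mathcal{F}_i=\langle e_1,\dots,e_i\rangle$ and put $\mathcal{F}'_i:=\langle e_{\sigma(1)},\dots,e_{\sigma(i)}\rangle$, which is again a full flag on $\bbF_q^n$. As both are spanned by standard basis vectors, $\dim(\mathcal{F}_i\cap\mathcal{F}'_i)=|\{1,\dots,i\}\cap\sigma(\{1,\dots,i\})|$, and an index $\ell\le i$ lies in $\sigma(\{1,\dots,i\})$ exactly when $\ell$ is an $H$- or $D$-position, or a $U$-position whose matching $D$ is also $\le i$. Counting these, and using that the number of $U$-positions $\le i$ whose match exceeds $i$ equals the prefix height $\delta_i$, gives $\dim(\mathcal{F}_i\cap\mathcal{F}'_i)=i-\delta_i$, i.e.\ $d_I(\mathcal{F}_i,\mathcal{F}'_i)=\delta_i$ for every $i$, which is what we want.

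I expect the ``if'' direction to be the real content: the key is to produce a closed-form construction rather than an inductive one, which risks getting stuck at some step, and the ``one transposition per balanced-parenthesis arch'' recipe does exactly that. The only slightly delicate point is the final bookkeeping identifying $|\{1,\dots,i\}\cap\sigma(\{1,\dots,i\})|$ with $i-\delta_i$, which is precisely where the balanced-parenthesis structure of the Motzkin word (matched versus straddling arches) is used. The ``only if'' direction, by contrast, is routine linear algebra once one has the idea of restricting everything to $W=\mathcal{F}_i\cap\mathcal{F}'_i$ before applying the dimension formula.
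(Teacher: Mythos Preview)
Your proof is correct. For the ``only if'' direction you work with the intersections $a_i=\dim(\cF_i\cap\cF'_i)$ and bound $a_i-a_{i-1}$, whereas the paper works with the sums $\dim(\cF_i+\cF'_i)$ and bounds their increments; these are dual and interchangeable via the Grassmann formula, so this part is essentially the same argument seen from the other side.

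The ``if'' direction, however, takes a genuinely different route. The paper builds the pair $(\cF,\cF')$ \emph{inductively}, extending $\cF_i,\cF'_i$ one dimension at a time with a case split on whether $\delta_{i+1}-\delta_i$ equals $+1$, $0$ or $-1$, and checking in each case that suitable vectors $u,u'$ can be chosen. Your construction is \emph{closed-form}: you encode the increments as a Motzkin word, read off the involution $\sigma$ from the balanced $U$/$D$ matching, and take $\cF'_i=\langle e_{\sigma(1)},\dots,e_{\sigma(i)}\rangle$ against the standard coordinate flag. The bookkeeping $|\{1,\dots,i\}\cap\sigma(\{1,\dots,i\})|=h_i+d_i+d_i=i-(u_i-d_i)=i-\delta_i$ is exactly right, since the $U$-positions $\le i$ whose match is also $\le i$ are in bijection with the $D$-positions $\le i$. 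What your approach buys is an explicit permutation-flag realisation of every distance vector, and it in effect anticipates the bijection $\Psi$ with Motzkin paths that the paper establishes only in the next theorem. What the paper's inductive approach buys is a cleaner logical separation: Theorem~\ref{prop: allowed pattern} is proved without invoking the Motzkin/parenthesis structure, which is then introduced afresh for Theorem~\ref{th: the bijection}. Neither argument needs any assumption on $q$.
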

\begin{proof}

Suppose that $(\delta_1, \dots, \delta_{n-1})$ is a distance vector. Then, there exists a pair of full flags $\cF, \cF'\in\cF_q(n)$ satisfying $\delta_j=d_I(\cF_j, \cF'_j)$, for every $1\leq j\leq n-1$. Let us see that
$\delta_{i} \in \{\delta_{i-1} -1, \delta_{i-1}, \delta_{i-1}+1\}.$
Note that $\delta_1, \delta_{n-1}\in\{0, 1\}$ given that $\delta_1, \delta_{n-1}$ are, respectively, distances between lines and hyperplanes of $\bbF_q^n$. Moreover, since $\delta_0=\delta_n=0$, the stated condition holds for both $i=1, n$. Consider now any index $2\leq i\leq n-1$. Note that
$$
\dim(\cF_{i-1} + \cF'_{i-1}) \leq \dim(\cF_i + \cF'_i) \leq \dim(\cF_{i-1} + \cF'_{i-1})+2.
$$
Hence, $\dim(\cF_i +\cF'_i)= \dim(\cF_{i-1} + \cF'_{i-1})+ k,$ with $k\in\{0, 1, 2\}$, and  we have
$$
\begin{array}{rcl}
\delta_i = d_I(\cF_i, \cF'_i) & = &  \dim(\cF_i+\cF_i)- i \\
                              & = & \dim(\cF_{i-1} + \cF'_{i-1})+ k-i\\
                              & = & \dim(\cF_{i-1} + \cF'_{i-1}) -(i-1) + (k-1)    \\
                              & = & d_I(\cF_{i-1} ,\cF'_{i-1}) + (k-1)\\
                               & = & \delta_{i-1} + (k-1),
\end{array}
$$
where $k-1\in\{-1, 0, 1\}.$ 

For the converse, assume that $(\delta_1, \dots, \delta_{n-1})$ is a vector satisfying (\ref{eq: dist vect char}). This condition, along with the fact that $\delta_0=\delta_n=0$, implies $\delta_i \leq \min\{i, n-i\}$, i.e., every $\delta_i$ is an admissible value for the injection distance between $i$-dimensional subspaces of $\bbF_q^n$. Using induction, we build flags $\cF, \cF'$ in $\cF_q(n)$ with $\bd(\cF, \cF')=(\delta_1, \dots, \delta_{n-1})$. First of all, by means of (\ref{eq: dist vect char}), it is clear that $\delta_1\in\{0, 1\}$. If $\delta_1=0$, just take $\cF_1=\cF'_1$. Otherwise, consider any two different lines $\cF_1, \cF'_1$. Assume now that for some $1\leq i < n-1$, we have found nested subspaces $\cF_1  \subsetneq  \dots  \subsetneq  \cF_i $ and $\cF'_1  \subsetneq  \dots  \subsetneq  \cF'_i$ such that $\delta_j=d_I(\cF_j, \cF'_j)=\dim(\cF_j+\cF'_j)-j,$ for every $1\leq j\leq i$. Let us give convenient $u, u' \in \bbF_q^n$ and subspaces $\cF_{i+1}=\cF_i+ \left\langle u\right\rangle$, $\cF'_{i+1} =\cF'_i + \left\langle u' \right\rangle$ such that $\delta_{i+1}= d_I(\cF_{i+1}, \cF'_{i+1})$ by distinguishing three situations.
\begin{itemize}
	\item $ \boldsymbol{\delta_{i+1}=\delta_i}$. In this case, notice that 
$
\dim(\cF_i+\cF'_i)-i = \delta_i = \delta_{i+1} \leq  n-(i+1).
$
Thus, $\dim(\cF_i+\cF'_i)\leq n-1$ and we can choose $u\in\bbF_q^n\setminus(\cF_i+\cF'_i)$ to form $\cF_{i+1}=\cF_i+ \left\langle u\right\rangle$. Now, if $\delta_i \neq 0$, we can take $u'\in (\cF_i+\cF'_i)\setminus \cF'_i$ and $\cF'_{i+1} =\cF'_i + \left\langle u' \right\rangle$. In case $\delta_i=0$, then $\delta_{i+1}=0$ and we just put $\cF'_{i+1}=\cF_{i+1}=\cF_i+ \left\langle u\right\rangle$. Note that, in any case, $\cF_{i+1}, \cF'_{i+1}$ are subspaces of dimension $i+1$ with
$$
\dim(\cF_{i+1}+\cF'_{i+1}) = \dim(\cF_i+\cF'_i + \left\langle u\right\rangle) = \dim(\cF_i+\cF'_i) + 1 = \delta_i + i + 1.
$$
As a consequence,  $\delta_{i+1}=\delta_i= \dim(\cF_{i+1}+\cF'_{i+1}) - (i+1)$ as desired.
\item $ \boldsymbol{\delta_{i+1}=\delta_i+1}$. Hence,
$
\dim(\cF_i+\cF'_i)-i = \delta_i = \delta_{i+1} - 1 \leq  n-(i+1)-1 = n-i-2
$
and $\dim(\cF_i+\cF'_i)\leq n-2$. This allows us to choose two linearly independent vectors $u, u'\in\bbF_q^n$ such that $$\dim(\cF_i+\cF'_i+ \langle u\rangle + \langle u'\rangle) = \dim(\cF_i+ \cF'_i)+2.$$ The subspaces $\cF_{i+1}=\cF_i+\langle u\rangle$ and $\cF'_{i+1}=\cF'_i+\langle u'\rangle$ satisfy the required condition.
\item  $ \boldsymbol{\delta_{i+1}=\delta_i-1}$. In this situation, notice that $\delta_i \geq 1$ and then $\cF_i\neq\cF'_i$. Therefore, we can find nonzero vectors $u\in\cF'_i\setminus\cF_i$ and $u'\in\cF_i\setminus\cF'_i$ to form two  $(i+1)$-dimensional subspaces $\cF_{i+1}=\cF_i+\langle u\rangle$ and $\cF'_{i+1}= \cF'_i+\langle u'\rangle$ with $d_I(\cF_{i+1}, \cF'_{i+1}) = \delta_{i+1}$.

\end{itemize}

\end{proof}

The previous theorem turns out to be the key to establish our bijection between the set of distance vectors corresponding to the full flag variety $\cF_q(n)$ and the set of Motzkin paths of length $n$.
 
\begin{theorem}\label{th: the bijection} Given a positive integer $n \geq 2$, there is a bijection between the set of distance vectors $\mathcal{D}(n)$  and the set of Motzkin paths $\mathcal{M}_n$.
\end{theorem}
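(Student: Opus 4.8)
The plan is to reduce the statement to the combinatorial characterization of distance vectors provided by Theorem \ref{prop: allowed pattern} and then observe that this characterization is, word for word, the description of a Motzkin path in terms of its sequence of step heights. So I would not attempt anything geometric about flags directly; instead I would define two explicit maps between $\mathcal{D}(n)$ and $\mathcal{M}_n$ and check that they are mutually inverse.

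First I would build a map $\Phi\colon\mathcal{D}(n)\to\mathcal{M}_n$. Given a distance vector $\bd=(\delta_1,\dots,\delta_{n-1})$, put $\delta_0=\delta_n=0$ and let $\Phi(\bd)=p_1p_2\cdots p_n$ be the word over $\{U,H,D\}$ with
\[
p_i=\begin{cases} U & \text{if } \delta_i=\delta_{i-1}+1,\\ H & \text{if } \delta_i=\delta_{i-1},\\ D & \text{if } \delta_i=\delta_{i-1}-1, \end{cases}
\]
for $1\le i\le n$. By Theorem \ref{prop: allowed pattern} these three cases are exhaustive and pairwise exclusive, so $p$ is well defined; moreover the partial sums of the steps of $p$ are exactly $\delta_0,\delta_1,\dots,\delta_n$, so the associated lattice path runs from $(0,0)$ to $(n,\delta_n)=(n,0)$, and since every $\delta_i\ge 0$ it never goes below the $x$-axis. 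Hence $\Phi(\bd)\in\mathcal{M}_n$.

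Conversely I would define $\Psi\colon\mathcal{M}_n\to\mathcal{D}(n)$ by sending a Motzkin path $p=p_1\cdots p_n$ to $(\delta_1,\dots,\delta_{n-1})$, where $\delta_i$ is the height of $p$ after its first $i$ steps. Then $\delta_i\ge 0$ because $p$ stays weakly above the $x$-axis, consecutive heights differ by $-1$, $0$ or $+1$, and $\delta_0=0=\delta_n$ because $p$ starts and ends on the axis; so $(\delta_1,\dots,\delta_{n-1})$ satisfies condition (\ref{eq: dist vect char}) and lies in $\mathcal{D}(n)$ by Theorem \ref{prop: allowed pattern}. The identities $\Psi\circ\Phi=\mathrm{id}$ and $\Phi\circ\Psi=\mathrm{id}$ are immediate from the constructions: reading off the step types of a path and then taking partial sums recovers its heights, and the forced boundary values $\delta_0=\delta_n=0$ match the requirement that a Motzkin path begin and end on the $x$-axis. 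Therefore $\Phi$ is a bijection and $|\mathcal{D}(n)|=|\mathcal{M}_n|=M_n$.

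I do not expect a real obstacle here, since Theorem \ref{prop: allowed pattern} already carries the analytic weight; the remaining work is purely a change of language. The single point worth a sentence of care is that the inequalities $\delta_i\le\min\{i,n-i\}$ that appear in the proof of Theorem \ref{prop: allowed pattern} are automatically enforced on the path side — rising by at most one per step from height $0$ forces $\delta_i\le i$, and the need to return to height $0$ by step $n$ forces $\delta_i\le n-i$ — so no extra hypothesis is needed for $\Phi$ to land in $\mathcal{M}_n$, and conversely these same inequalities are exactly what make each $\delta_i$ an admissible injection distance between $i$-dimensional subspaces. One could also remark that under this correspondence the flags collapsing at position $i$ corresponds to $\delta_i=0$, which is the natural setup for the refinements (elevated and Riordan paths) announced for later in the section.
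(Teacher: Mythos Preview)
Your proof is correct and follows essentially the same approach as the paper: define the two maps between distance vectors and Motzkin words via consecutive differences and via step heights (equivalently, $u_i-d_i$), use Theorem~\ref{prop: allowed pattern} to see that both maps land in the right sets, and check they are mutually inverse. The only cosmetic difference is that the paper names the maps the other way around and verifies the inverse property by a short induction rather than by your (equally valid) observation that heights and step types determine one another.
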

\begin{proof} Recall that in this paper we do not distinguish between a Motzkin path and the corresponding Motzkin word.  Consider $(\delta_1, \ldots, \delta_{n-1}) \in \mathcal{D}(n) $ and put $\delta_0=\delta_n=0$. In light of Theorem \ref{prop: allowed pattern}, for any $i \in \{1,\dots, n\}$, it holds
	$$\delta_{i}-\delta_{i-1} \in \{-1,0,1\}.$$	 
Consider the following map:
\begin{equation}\label{eq:Bijection distance vectors Motzkin paths}
\begin{array}{cccc}
	\Psi: & \mathcal{D}(n)                  & \longrightarrow & \mathcal{M}_n \\
	      & (\delta_1, \dots, \delta_{n-1}) & \longmapsto     & p_1\dots p_n ,
\end{array}
\end{equation}
where, for every $1\leq i\leq n$, we take
\begin{equation}\label{eq:Expression of Psi}
	p_i=\left\{
	\begin{array}{cccc}
		U &  \text{ if } & \delta_i-\delta_{i-1}=&1, \\
		H &  \text{ if } & \delta_i-\delta_{i-1}=&0, \\
		D &  \text{ if } & \delta_i-\delta_{i-1}=&-1.
	\end{array}
\right.
\end{equation}

Let us see that the word $\Psi((\delta_1, \dots, \delta_{n-1}))$ is a Motzkin word. To do so, assume that, for some $1\leq i\leq n$, in the subword $p_1\dots p_i$, the letter $D$ appears more times than $U$. This means that, up to the $i$-th step, the difference $\delta_j-\delta_{j-1}=-1$ occurs more times than $\delta_j-\delta_{j-1}=1$. As a result, we have that
$
\delta_i =  \sum_{j=1}^i \delta_{j}-\delta_{j-1} < 0, 
$
which is a contradiction. Similarly, since $\sum_{j=1}^{n} \delta_{j}-\delta_{j-1}= \delta_{n}-\delta_0=0$, the number of occurrences of both $U$ and $D$ coincides.

On the other hand, given $p \in \mathcal{M}_n$, we have that $p=p_1 \dots p_n$ is a sequence of $n$ letters $p_i \in \{U, H, D\}$, for $i=1, \dots, n$ such that, in particular, $p_1 \in \{U, H\}$ and $p_n \in \{ H, D\}$. Now, for $i=1, \dots, n$, we consider the following associated values: $u_i$ (resp. $h_i$, $d_i$) is the number of occurrences of an $U$ (resp. $H$, $D$) step at or before $p_i$. Note that $d_1=0$ and $u_n=d_n$. Moreover, for every $1\leq i\leq n$, it holds $u_i+h_i+d_i=i$ and $u_i\geq d_i$. 
From these values, we define a map $\Phi: \mathcal{M}_n \rightarrow \mathcal{D}(n)$ as follows:
\begin{equation}\label{eq: expression of Phi}
	\Phi(p_1\dots p_n)=(u_1-d_1, u_2-d_2, \dots, u_{n-1}-d_{n-1}).
\end{equation}
As a consequence of the definition of $u_i, h_i, d_i$, we have that 

$$
(u_i-d_i)-(u_{i-1}-d_{i-1})
=
\left\lbrace
\begin{array}{rcl}
1  & \text{if} & p_i= U,\\
0  & \text{if} & p_i= H,\\
-1 & \text{if} & p_i= D,
\end{array}
\right.
$$
with $2\leq i \leq n$. As $u_1-d_1 \in \{0,1\}$, by means of Theorem \ref{prop: allowed pattern}, $\Phi(p_1\dots p_n)$ is a distance vector.
 
Finally, we prove that $\Phi$ and $\Psi$ are mutually inverse. To do so, consider $(\delta_1, \dots, \delta_{n-1}) \in \cD(n)$. Let us see that, if $ \Phi \circ \Psi ((\delta_1, \dots, \delta_{n-1}))= \Phi (p_1p_2 \dots p_n)=(u_1-d_1, u_2-d_2, \dots, u_{n-1}-d_{n-1})$ (defined as in (\ref{eq: expression of Phi}) and (\ref{eq:Expression of Psi})), then $\delta_i=u_i-d_i$, for all $i \in \{1,\dots, n-1\}$.  We work by induction on $n$. Clearly, if $\delta_1=1$ (resp. $\delta_1=0$), then $p_1=U$ (resp.  $p_1=H$) and $u_1-d_1=1=\delta_1$ (resp. $u_1-d_1=0=\delta_1$ ). Assume that the result is true,  for $ k < n-1$, that is, $\delta_k=u_k-d_k$.  We now that $\delta_{k+1}-\delta_k \in \{-1,0,1\}$.  Suppose, for instance, that  $\delta_{k+1}-\delta_{k}=1$.  Then $p_{k+1}=U$ and, as $\delta_k=u_k-d_k$ by induction hypothesis, we have $u_{k+1}-d_{k+1}=(u_k-d_k)+1=\delta_k+1$. Thus, $u_{k+1}-d_{k+1}=\delta_{k+1}$ as desired. The cases where $\delta_{k+1}-\delta_{k}=0$ or $\delta_{k+1}-\delta_{k}=-1$ are analogous.

Take now $p_1p_2\dots p_n$ a Motzkin word of length $n$ such that $\Psi \circ \Phi (p_1p_2\dots p_n)=p_1'p_2'\dots p_n'$. Let us see that $p_i=p_i'$, for any $i \in \{1, \dots, n\}$. Let us work again by induction on $n$. It is clear that, if $p_1=U$ ( resp. $p_1=H$) then $u_1-d_1=1$ (resp. $u_1-d_1=0$) and $p_1'=U=p_1$ (resp. $p_1'=H=p_1$). Assume that $p_k=p_k'$ for $k <n$. If $p_{k+1}'=U$, then $u_{k+1}-d_{k+1}-(u_k-d_k)=1$ which means that $p_{k+1}=U=p_{k+1}'$. The cases $p_{k+1}'=H$ or $p_{k+1}'=D$ go analogously. Hence $\Psi$ is a bijection as we wanted to show. 
\end{proof}

\begin{example}
	To the distance vector $v=(1,2,2,3,2,1,1,0) $ it corresponds the Motzkin path $\Psi(v)=UUHUDDHDH$ represented in Figure \ref{Fig:Motzkin path 1}. For it, we have $u_6=3, h_6=1, d_6=2$, for instance. In fact,
	$\Phi \circ \Psi (v)=(1-0,2-0, 2-0, 3-0, 3-1, 3-2, 3-2, 3-3)=v$.
\end{example}

We can now provide the answer to the first question proposed in Section \ref{sec: distance vectors}.
	\begin{corollary}
		The number of distance vectors in $\mathcal{D}(n)$ is the $n$-th Motzkin number $M_n$.
	\end{corollary}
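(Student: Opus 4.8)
The plan is to read off the corollary directly from the bijection established in Theorem~\ref{th: the bijection}. That theorem produces mutually inverse maps $\Psi\colon \mathcal{D}(n)\to\mathcal{M}_n$ and $\Phi\colon\mathcal{M}_n\to\mathcal{D}(n)$, so in particular $\Psi$ is a bijection and hence $|\mathcal{D}(n)|=|\mathcal{M}_n|$. By the material on Motzkin numbers recalled in Section~\ref{sec: preliminaries}, the cardinality of $\mathcal{M}_n$ is precisely the $n$-th Motzkin number $M_n$, and therefore $|\mathcal{D}(n)|=M_n$. That is the entire argument.

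The only things worth spelling out are bookkeeping points. First, $\mathcal{D}(n)$ is a finite set: by Theorem~\ref{prop: allowed pattern} together with $\delta_0=\delta_n=0$, every distance vector $(\delta_1,\dots,\delta_{n-1})$ satisfies $0\le\delta_i\le\min\{i,n-i\}$, so $\mathcal{D}(n)$ is a finite subset of $\bbZ^{n-1}$; this is what makes the equality of cardinalities meaningful. Second, one should be sure that the maps in Theorem~\ref{th: the bijection} are genuinely defined on the whole of $\mathcal{D}(n)$ rather than merely on the abstract set of integer sequences obeying~(\ref{eq: dist vect char}); but this is exactly the content of Theorem~\ref{prop: allowed pattern}, which identifies $\mathcal{D}(n)$ with the set of such sequences, and the verification in Theorem~\ref{th: the bijection} that $\Psi$ and $\Phi$ are well defined and mutually inverse between these two sets.

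There is essentially no obstacle: all the substance was already carried out in Theorems~\ref{prop: allowed pattern} and~\ref{th: the bijection}, and the corollary is a one-line consequence. If one instead wanted a proof not routed through Motzkin paths, an alternative would be to derive the recurrence $M_n=M_{n-1}+\sum_{k=0}^{n-2}M_kM_{n-k-2}$ directly on $\mathcal{D}(n)$ by conditioning on the position of the first return to $0$ of the sequence $(\delta_0,\delta_1,\dots,\delta_n)$ (an $H$-step at the start, or a $U\cdots D$ block enclosing an elevated piece followed by a shorter tail); but this merely re-derives $|\mathcal{M}_n|=M_n$ in disguise and is strictly more work than quoting Theorem~\ref{th: the bijection}, so I would not take that route.
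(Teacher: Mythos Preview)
Your proposal is correct and matches the paper's approach exactly: the corollary is stated immediately after Theorem~\ref{th: the bijection} with no separate proof, being an immediate consequence of the bijection $\Psi$ together with the fact that $|\mathcal{M}_n|=M_n$. Your additional bookkeeping remarks (finiteness of $\mathcal{D}(n)$, well-definedness via Theorem~\ref{prop: allowed pattern}) and the alternative recurrence argument are sound but superfluous, since the paper treats the result as a one-line consequence.
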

\begin{remark}  Note that in our work \cite{Combinatorial}, given a pair of full flags $\cF$ and $\cF'$ on $\bbF_q^n$, we defined their \emph{distance path} $\Gamma(\cF, \cF')$ as the  polygonal path in $\mathbb{Z}^2$ from $(0,0)$ to $(n,0)$ whose intermediate vertices are the points $(i, d_I(\cF_i, \cF'_i))$ for every $0< i<n$. Therefore, by virtue of Theorem  \ref{prop: allowed pattern}, our distance paths are Motzkin paths and conversely.
\end{remark}

\subsection{Distance vectors corresponding to disjoint flags}\label{Subsec: the disjoint case}

Recall from Section \ref{sec: preliminaries} that a couple of flags $\cF, \cF'$ on $\bbF_q^n$ are disjoint if they do not share any subspace, that is, if they do not present collapses at any dimension. On the other hand, if $\cF, \cF'$ collapse at dimension $i$, that is $\cF_i=\cF_i'$, then the $i$-th component in $\textbf{d}(\cF, \cF')$ is equal to zero.

 By means  of (\ref{eq: expression of Phi}), for a distance vector $v=(\delta_1, \dots, \delta_{n-1}) \in \mathcal{D}(n)$ such that $\delta_i=0$, we have that $u_i=d_i$. Graphically, the Motzkin path $\Psi(d)$ intersects the $x$-axis at the point $(i,0)$. In other words, two full flags collapsing at dimension $i$ induce a Motzkin path in which the point $(i,0)$ is a return. This fact enables us to state the following result. Its proof can be easily obtained by using the maps $\Psi$ and $\Phi$ defined in the proof of Theorem \ref{th: the bijection}, which are still bijections when restricted to $\cE_n$ and the set of distance vectors in $\cD(n)$ without null components.

\begin{proposition}\label{prop:elevated-disjoint}
Given a positive integer $n \geq 2$, there is a bijection between the set of distance vectors in $\mathcal{D}(n)$ corresponding to pairs of disjoint flags and the set of elevated Motzkin paths $\mathcal{E}_n$.
\end{proposition}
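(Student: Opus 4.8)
The plan is to restrict the bijection $\Psi\colon\mathcal{D}(n)\to\mathcal{M}_n$ of Theorem~\ref{th: the bijection} (with inverse $\Phi$) to suitable subsets and to identify the image. The first step is to translate disjointness into a purely combinatorial condition on distance vectors. If two full flags $\cF,\cF'$ collapse at dimension $i$, i.e.\ $\cF_i=\cF'_i$, then obviously $d_I(\cF_i,\cF'_i)=0$; conversely, since $\cF_i$ and $\cF'_i$ are both $i$-dimensional, $d_I(\cF_i,\cF'_i)=i-\dim(\cF_i\cap\cF'_i)=0$ forces $\cF_i=\cF'_i$. Hence $\cF$ and $\cF'$ are disjoint if and only if every component of $\bd(\cF,\cF')$ is nonzero. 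Moreover, being a distance vector is a property of the vector, not of a particular realizing pair: if $v=(\delta_1,\dots,\delta_{n-1})\in\mathcal{D}(n)$ has all components nonzero, then \emph{every} pair $\cF,\cF'$ with $\bd(\cF,\cF')=v$ satisfies $d_I(\cF_i,\cF'_i)=\delta_i\neq 0$ for all $i$, so such a pair is automatically disjoint. Therefore the set of distance vectors arising from pairs of disjoint flags is exactly
$$\mathcal{D}^{*}(n):=\{\,v\in\mathcal{D}(n)\ :\ v\text{ has no null component}\,\}.$$

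Next I would invoke the explicit description of $\Phi$ given in~(\ref{eq: expression of Phi}): writing $\Psi(v)=p_1\dots p_n$ and letting $u_i,d_i$ be the numbers of $U$'s and $D$'s among $p_1\dots p_i$, one has $\delta_i=u_i-d_i$ for every $1\le i\le n-1$, since $\Phi=\Psi^{-1}$. In other words, $\delta_i$ is precisely the ordinate of the Motzkin path $\Psi(v)$ at abscissa $i$; as a lattice path with steps $U,D,H$ only touches the $x$-axis at integer abscissas, a point $(i,0)$ with $0<i<n$ is a return of $\Psi(v)$ exactly when $\delta_i=0$. Consequently $v\in\mathcal{D}^{*}(n)$ if and only if $\Psi(v)$ has no returns, i.e.\ $\Psi(v)\in\mathcal{E}_n$. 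In the other direction, if $p$ is elevated then $u_i-d_i\ge 1$ for every $1\le i\le n-1$, so $\Phi(p)$ is a distance vector with no null component. Thus $\Psi$ and $\Phi$ restrict to mutually inverse bijections between $\mathcal{D}^{*}(n)$ and $\mathcal{E}_n$, which is the assertion of the proposition.

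The whole argument is bookkeeping once Theorem~\ref{th: the bijection} is in hand; the only point deserving a little care is the reduction in the first paragraph, namely verifying that the distance vectors realized by disjoint pairs form precisely the set of distance vectors with no zero entry (rather than a proper subset). Combining the proposition with the count $E_n=M_{n-2}$ for $n\ge2$ recorded in~(\ref{eq: elevated Motzkin numbers}) then yields at once that the number of distance vectors in $\mathcal{D}(n)$ corresponding to disjoint pairs of full flags equals $M_{n-2}$.
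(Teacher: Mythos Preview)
Your proof is correct and follows essentially the same approach as the paper: restrict the bijection $\Psi$ (with inverse $\Phi$) from Theorem~\ref{th: the bijection} and identify distance vectors with no null component with elevated Motzkin paths via the height interpretation $\delta_i=u_i-d_i$. You are in fact more careful than the paper in explicitly verifying that the distance vectors arising from disjoint pairs are \emph{exactly} those with no zero entry (using that $d_I(\cF_i,\cF'_i)=0\iff\cF_i=\cF'_i$ for equidimensional subspaces), a point the paper leaves largely implicit.
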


\begin{corollary}
The number of distance vectors in $\mathcal{D}(n)$ corresponding to pairs disjoint flags is the $n$-th  number $E_n$ defined in (\ref{eq: elevated Motzkin numbers}).
\end{corollary}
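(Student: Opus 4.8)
The plan is to read the count directly off Proposition \ref{prop:elevated-disjoint}, combined with the cardinality of $\mathcal{E}_n$ already recorded in (\ref{eq: elevated Motzkin numbers}). Indeed, that proposition asserts a bijection between the set of distance vectors in $\mathcal{D}(n)$ arising from pairs of disjoint full flags and the set $\mathcal{E}_n$ of elevated Motzkin paths of length $n$; hence the number we are after is exactly $|\mathcal{E}_n|$, which by definition is the integer $E_n$, i.e.\ $E_0=E_1=0$ and $E_n=M_{n-2}$ for $n\geq 2$. So the whole argument consists in invoking the bijection and substituting $|\mathcal{E}_n|=E_n$.

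To make the argument self-contained I would first recall why the bijection $\Psi$ of Theorem \ref{th: the bijection} restricts correctly. For full flags $\cF,\cF'$, collapsing at dimension $i$ means $\cF_i=\cF'_i$, and since both subspaces have dimension $i$ this is equivalent to $d_I(\cF_i,\cF'_i)=0$, that is, to the $i$-th component $\delta_i$ of $\bd(\cF,\cF')$ being zero. Thus $\cF,\cF'$ are disjoint exactly when $\delta_i>0$ for all $1\leq i\leq n-1$. On the Motzkin side, $(\ref{eq: expression of Phi})$ gives $\delta_i=u_i-d_i$, so $\delta_i=0$ precisely when the path $\Psi(\bd(\cF,\cF'))$ meets the $x$-axis at $(i,0)$, i.e.\ has a return there. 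Consequently $\Psi$ sends disjoint distance vectors onto elevated Motzkin paths, and $\Phi$ sends elevated paths to distance vectors with all positive components; the latter are realised by disjoint pairs because the inductive construction in the proof of Theorem \ref{prop: allowed pattern} yields $\cF_i\neq\cF'_i$ whenever $\delta_i\neq 0$. This is exactly the restricted bijection stated in Proposition \ref{prop:elevated-disjoint}.

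Finally I would conclude: combining the restricted bijection with $|\mathcal{E}_n|=E_n$ shows that the number of distance vectors in $\mathcal{D}(n)$ corresponding to pairs of disjoint flags equals $E_n$, which is $0$ for $n\leq 1$ and equals the Motzkin number $M_{n-2}$ for $n\geq 2$. There is no real obstacle here, since all the substantive work already lives in Theorem \ref{th: the bijection} and its restriction in Proposition \ref{prop:elevated-disjoint}; the only point worth a line of care is the equivalence ``collapse at $i$ $\iff$ $\delta_i=0$'', which uses that the two subspaces share the dimension $i$ — a feature specific to the full-flag setting.
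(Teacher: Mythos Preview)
Your proposal is correct and follows exactly the paper's approach: the corollary is stated without proof, as an immediate consequence of Proposition \ref{prop:elevated-disjoint} together with the definition $|\mathcal{E}_n|=E_n$ from (\ref{eq: elevated Motzkin numbers}). Your additional paragraph spelling out why $\Psi$ restricts to a bijection onto $\mathcal{E}_n$ simply recapitulates the discussion the paper gives just before Proposition \ref{prop:elevated-disjoint}, so the arguments coincide.
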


\subsection{Flag distance equals area}
Let us show that the map $\Psi$ takes flag distance to area. In other words, if $v=(\delta_1, \dots, \delta_{n-1}) \in \mathcal{D}(d,n)$, that is,  $d=\delta_1+\cdots+\delta_{n-1}$, then $A(\Psi(v))=d$. To do this, we work first with elevated Motzkin paths, which means that our corresponding distance vectors have no null component, by Proposition \ref{prop:elevated-disjoint}. Let us describe a decomposition of any such a distance vector that may result very convenient for our purposes.

\begin{proposition}\label{prop:decompostion distance vector}
	Consider $v=(\delta_1, \dots, \delta_{n-1}) \in \mathcal{D}(n)$ with $\delta_i \neq 0$ for every $i \in \{1,\dots, n-1\}$. Then, there is $r\geq 1$ a positive integer  and  vectors $v^k=(v_1^k,\dots, v_{n-1}^k) \in \bbF_2^{n-1}$ for $k\in \{1, \dots, r\}$ such that 
	\begin{equation}\label{eq:suma vertical}
	\delta_i=\sum_{k=1}^{r}v_i^k.
	\end{equation}

\end{proposition}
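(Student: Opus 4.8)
The plan is to use the \emph{superlevel} (or staircase) decomposition of $v$. Since every $\delta_i$ is a nonnegative integer and $\delta_i\neq 0$ for $1\leq i\leq n-1$, we have $\delta_i\geq 1$ for all such $i$; set $r:=\max\{\delta_1,\dots,\delta_{n-1}\}$, which is a positive integer. For each $k\in\{1,\dots,r\}$, define $v^k=(v_1^k,\dots,v_{n-1}^k)\in\bbF_2^{n-1}$ by letting $v_i^k=1$ if $\delta_i\geq k$ and $v_i^k=0$ otherwise. Reading the entries of the $v^k$ as integers, the identity (\ref{eq:suma vertical}) is then immediate: for a fixed $i$, since $1\leq\delta_i\leq r$, the number of indices $k\in\{1,\dots,r\}$ with $\delta_i\geq k$ equals $\delta_i$, that is, $\sum_{k=1}^r v_i^k=\delta_i$.

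Next I would record the two features of this decomposition that make it convenient for the next subsection. First, the supports are nested, $\mathrm{supp}(v^1)\supseteq\mathrm{supp}(v^2)\supseteq\cdots\supseteq\mathrm{supp}(v^r)$, simply because $\delta_i\geq k+1$ forces $\delta_i\geq k$. Second, each layer $v^k$ is itself a distance vector in $\cD(n)$: appending $v_0^k=v_n^k=0$, all entries of $v^k$ lie in $\{0,1\}$, hence $|v_i^k-v_{i-1}^k|\leq 1$ for every $1\leq i\leq n$, and Theorem~\ref{prop: allowed pattern} applies. These two properties (nestedness, and each layer being a genuine $\{0,1\}$ distance vector) are exactly what make the decomposition suitable for the forthcoming reduction of the flag-distance-equals-area statement to the two-letter alphabet case.

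I do not expect a genuine obstacle here: the whole content is to choose the superlevel decomposition rather than a greedy, column-by-column, or otherwise ad hoc one, and then to check the one-line counting identity $\#\{k: 1\leq k\leq r,\ \delta_i\geq k\}=\delta_i$. The single point that must not be overlooked is the use of the hypothesis $\delta_i\neq 0$: it is what guarantees $r\geq 1$, so that the conclusion is non-vacuous, and it is what keeps the bookkeeping clean, every $\delta_i$ being reached counting from $k=1$. One may also remark that the decomposition is far from unique (padding with zero vectors shows that any $r\geq\max_i\delta_i$ works) and that $r=\max_i\delta_i$ is its minimal admissible value.
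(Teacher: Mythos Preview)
Your proof is correct and takes essentially the same approach as the paper: both set $r=\max_i\delta_i$ and define $v_i^k=1$ if $\delta_i\geq k$ and $0$ otherwise, which is precisely the superlevel decomposition. Your additional remarks on nestedness of supports and on each layer $v^k$ being itself a distance vector are correct extras that the paper does not state in this proof, though they align with how the decomposition is used immediately afterwards.
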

\begin{proof}
	Take $v=(\delta_1, \dots, \delta_{n-1}) \in \mathcal{D}(n)$ with $\delta_i \neq 0$ for $i \in \{1,\dots, n-1\}$. Note that, in this case, $\delta_1=\delta_{n-1}=1$. Take $r=\max\{\delta_1, \dots, \delta_{n-1}\}$ and, for any $k\in\{1, \dots, r\}$ define a vector
	$v_k=(v_1^k, \dots, v_{n-1}^k)$ such that $v_i^k=0$ if $k> \delta_i$ and  $v_i^k=1$ otherwise. With this choice the desired equality (\ref{eq:suma vertical})  holds.
	
\end{proof}
\begin{remark}
	Note that, if we denote $\rho_k= \sum_{i=1}^{n-1}v_i^k$, from the previous decomposition, we have that 
\begin{equation}\label{eq: suma horizontal y vertical}
d= \sum_{i=1}^{n-1} \delta_i=\sum_{i=1}^{n-1}\sum_{k=1}^{r}v_i^k= \sum_{k=1}^{r}\sum_{i=1}^{n-1}v_i^k= \sum_{k=1}^{r} \rho_k.
\end{equation}
\end{remark}

\begin{example}\label{ex: vertical decomposition distance vector} Take the distance vector $v=(1,2,3,2,1,2,2,1) \in \mathcal{D}(9)$. Thus, $r=3$, $v_1=(1,1,1,1,1,1,1,1), v_2=(0,1,1,1,0,1,1,0), v_3=(0,0,1,0,0,0,0,0)$. In fact, we can put
$$
\begin{array}{cccccccccl}
	0&0&1&0&0&0&0&0& \rightarrow & \rho_3= 1\\
	0&1&1&1&0&1&1&0& \rightarrow & \rho_2=3+2\\
	1&1&1&1&1&1&1&1& \rightarrow & \rho_1=9\\
	\downarrow&\downarrow&\downarrow&\downarrow&\downarrow&\downarrow&\downarrow&\downarrow&  & \\
	1&2&3&2&1&2&2&1&  & \textbf{14}\\
	\verteq & \verteq & \verteq & \verteq & \verteq & \verteq & \verteq & \verteq & \\
	\delta_1 & \delta_2 & \delta_3 & \delta_4 & \delta_5 & \delta_6 & \delta_7 & \delta_8 & 
\end{array}
$$
and observe how the flag distance, $d=14$ in this case, can be decomposed ``vertically'' and ``horizontally''.

\end{example}
Following the ideas in \cite{Guay-Petersen}, let us see that the area under a Motzkin path $p \in \mathcal{M}_p$ can be decomposed into horizontal strips determined by the up and down steps. 
\begin{proposition}\label{prop:formula suma horizonatl strips}
Let $p \in  \mathcal{E}_n$ be an elevated Motzkin path. Then
\begin{equation}\label{eq:formula suma horizonatl strips}
	A(p)=\sum_{p_j=D} j- \sum_{p_i=U} i.
\end{equation}
\end{proposition}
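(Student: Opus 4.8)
The plan is to compute the area under an elevated Motzkin path $p \in \mathcal{E}_n$ by decomposing the region into horizontal unit strips, one for each height level, and then counting the contribution of each strip in terms of the $U$ and $D$ steps that delimit it. Concretely, for each integer level $\ell \geq 1$, let $S_\ell$ be the horizontal strip lying between heights $\ell-1$ and $\ell$; the portion of $S_\ell$ below the path $p$ consists of unit squares, and $A(p) = \sum_{\ell \geq 1} (\text{number of unit squares of } S_\ell \text{ under } p)$. The key observation is that the path enters level $\ell$ (crosses from below height $\ell-1$ up through it) exactly through an up step $U$ at some abscissa $i$, and leaves level $\ell$ exactly through a down step $D$ at some abscissa $j > i$; during the interval $[i, j]$ the path stays at height $\geq \ell-1$, contributing exactly $j - i$ unit squares to $S_\ell$ over that sojourn. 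Summing $j - i$ over all maximal sojourns at level $\geq \ell$ and then over all $\ell$ rearranges into $\sum_{p_j = D} j - \sum_{p_i = U} i$, since each $D$ step at abscissa $j$ contributes $+j$ to exactly one strip (the strip it exits) and each $U$ step at abscissa $i$ contributes $-i$ to exactly one strip (the strip it enters).

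First I would make the bookkeeping precise. Since $p$ is elevated, it never returns to the $x$-axis at an intermediate point, and in particular there are no $H$ steps on the $x$-axis; but more importantly the pairing of up and down steps is clean. I would invoke the balanced-parenthesization viewpoint from Remark \ref{rem: parenthesis}: matching each $U$ with its corresponding $D$ gives a perfect matching of the up steps with the down steps, and if a $U$ at abscissa $i$ is matched to a $D$ at abscissa $j$, then between abscissa $i$ and abscissa $j$ the partial height (relative to the height just before the $U$) is strictly positive. The height contributed by this matched pair to the total path height is precisely the indicator that we are "inside" this pair, so the area decomposes as $A(p) = \sum_{(i,j) \text{ matched}} (j - i)$. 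Expanding this sum gives $\sum_{(i,j)} j - \sum_{(i,j)} i = \sum_{p_j = D} j - \sum_{p_i = U} i$, because the matching is a bijection between $D$-abscissas and $U$-abscissas.

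The cleanest way to justify $A(p) = \sum_{(i,j) \text{ matched}} (j-i)$ rigorously is to write the height function explicitly: if $h_0 = 0$ and $h_k = h_{k-1} + (\text{$+1$, $0$, or $-1$})$ according to whether $p_k$ is $U$, $H$, or $D$, then $A(p) = \sum_{k=1}^{n} \frac{h_{k-1} + h_k}{2}$ (the area of the trapezoid over the $k$-th unit interval). Since $h_k - h_{k-1} \in \{-1,0,1\}$ and each $h_{k-1}+h_k$ is best handled by instead noting $A(p) = \sum_{k=1}^n h_k - \frac12\sum_{k=1}^n (h_k - h_{k-1}) = \sum_{k=1}^{n} h_k$ using $h_0 = h_n = 0$ for Motzkin paths, and then $h_k = \#\{i \le k : p_i = U\} - \#\{i \le k : p_i = D\}$, one gets $\sum_{k=1}^n h_k = \sum_{i : p_i = U}(n - i + 1) - \sum_{j : p_j = D}(n - j + 1) = \sum_{p_j = D} j - \sum_{p_i = U} i$, where the constant terms $n+1$ cancel because the number of $U$'s equals the number of $D$'s. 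This last computation is entirely routine; the hypothesis that $p$ is elevated is not even strictly needed for the formula itself (it holds for any Motzkin path), but it matches the use to which the proposition will be put.

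The main obstacle, such as it is, is purely expository: making the "telescoping" identity $A(p) = \sum_{k=1}^n h_k$ transparent without belaboring the trapezoid decomposition, and making sure the cancellation of the $n+1$ terms is clearly attributed to $\#U = \#D$ (a Motzkin-word property already recorded in the definition). I do not anticipate any genuine difficulty; the only thing to watch is the off-by-one in whether the $k$-th step sits over the interval $[k-1,k]$ or $[k,k+1]$, which fixes whether the answer is $\sum (j - i)$ or $\sum (j - i)$ with shifted indices — the convention in the paper (steps $p_1, \dots, p_n$, path from $(0,0)$ to $(n,0)$) makes $\sum_{p_j=D} j - \sum_{p_i=U} i$ the correct normalization, and I would double-check it against the worked example $p = UUHUDDHDH$ with $A(p) = 12$.
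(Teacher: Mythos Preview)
Your proposal is correct and actually contains two complete proofs. The first --- pairing each $U$ with its matched $D$ via the balanced-parenthesization viewpoint and observing that each matched pair $(i,j)$ contributes a horizontal strip of area $j-i$ --- is exactly the argument the paper gives, only you spell out the strip geometry a bit more carefully.

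Your second route, however, is genuinely different from the paper. You compute directly
\[
A(p)=\sum_{k=1}^{n}\frac{h_{k-1}+h_k}{2}=\sum_{k=1}^{n}h_k
\]
(using $h_0=h_n=0$), rewrite $h_k$ as $\#\{i\leq k:p_i=U\}-\#\{j\leq k:p_j=D\}$, swap the order of summation, and cancel the $(n+1)$-terms via $\#U=\#D$. This is a purely algebraic telescoping argument that bypasses the matching altogether. Its advantage is that it makes transparent, as you note, that the elevated hypothesis is unnecessary: the identity $A(p)=\sum_{p_j=D}j-\sum_{p_i=U}i$ holds for every Motzkin path, which in fact renders Remark~\ref{rem: generalization area} (decomposing a path with returns into elevated pieces) superfluous for the purposes of Theorem~\ref{th:distance equals area}. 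The paper's geometric strip argument, by contrast, is more visual and ties nicely to the ``horizontal'' decomposition of the distance vector in Proposition~\ref{prop:decompostion distance vector}, but it leaves the non-elevated case to be handled separately. Your sanity check against $p=UUHUDDHDH$ (which is \emph{not} elevated) already confirms this generality.
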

\begin{proof}
Given $p=p_1p_2\dots p_n \in \mathcal{M}_n$, as it was pointed out in Remark \ref{rem: parenthesis}, the letters $U$ and $D$ in $p$ form a balanced  parenthesization, so they can be naturally paired up. Moreover, every couple of matched steps $p_i=U$ and $p_j=D$ determines a horizontal strip of area $j-i$ below the Motzkin path $p$. The sum of areas of all the corresponding horizontal strips gives us expression (\ref{eq:formula suma horizonatl strips}) for $A(p)$.

\end{proof}
 \begin{example} Let us take $p \in \mathcal{M}_9$ with $p=UUUDDUHDD$. In this case, we have the paired steps $(p_1, p_9)$, $(p_2,  p_5)$, $(p_6,p_8)$ and $(p_3, p_4)$.
 		\begin{figure}[H]
\begin{center}
			\includegraphics[scale=0.75]{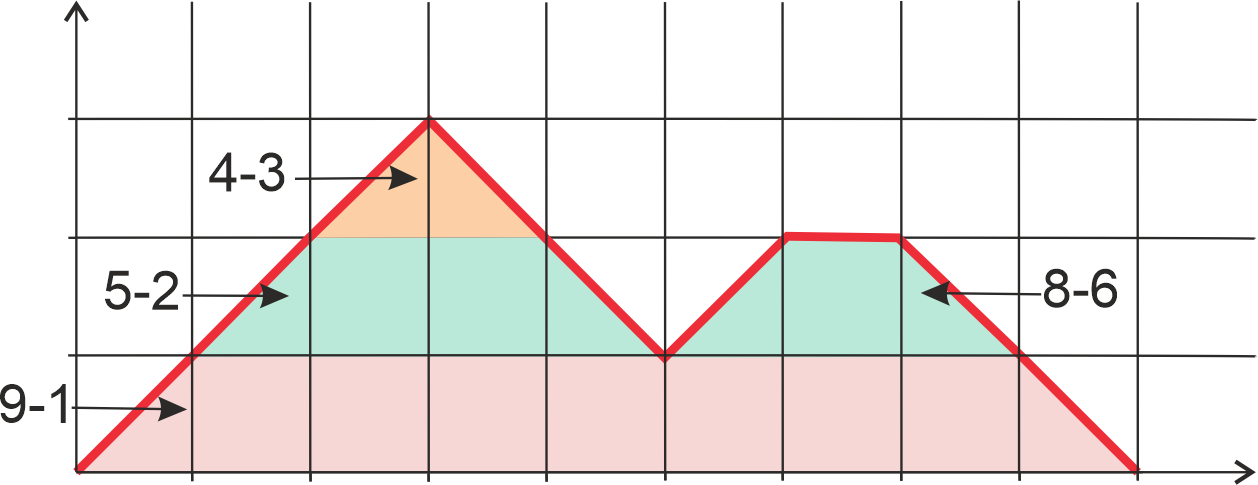}
			\caption{The Motzkin path $p=UUUDDUHDD$ in $\mathcal{M}_9$.}
			\label{Fig:Motzkin path 2}	
\end{center}
 	\end{figure}
Then we have that $$A(p)=(9-1)+(5-2)+(8-6)+(4-3)=8+3+2+1=14.$$

Observe that our path $p$ satisfies $p=\Psi(v)$ where $v$ is the vector considered in Example \ref{ex: vertical decomposition distance vector}. Moreover, the areas corresponding to horizontal strips at height $1$, $2$ or $3$, as showed in the picture above, correspond, respectively, to the sum of the non-zero components of $v_1$, $v_2$ or $v_3$. 
 \end{example} 
 
\begin{remark}\label{rem: generalization area}
	Note that the null distance vector in $\mathcal{D}(n)$ goes by $\Psi$ to the ``flat'' Motzkin path. Besides, a null component in a distance vector $v$ does not contribute to the computation of the corresponding value of the flag distance and gives a return on $\Psi(v)$. On the other hand, a Motzkin path $p$ with a finite number of returns can be decomposed into a finite number of elevated Motzkin paths whose sum of areas gives $A(p)$. In Figure \ref{Fig:Motzkin path 3} we can see a Motzking path $p \in \mathcal{M}_9$ with a return in $(4,0)$. This path corresponds to the distance vector $v=(1,2,1,0,1,2,2,1) \in \mathcal{D}(10, 9)$. Then $p$ can be decomposed into two elevated Motzkin paths $p'=UUDD \in \mathcal{M}_4$ and $p''= UUHDD \in \mathcal{M}_5$ such that $A(p)=A(p')+A(p'')=10$.
	
\end{remark}

	\begin{figure}[H]
	\begin{center}

		\includegraphics[scale=0.75]{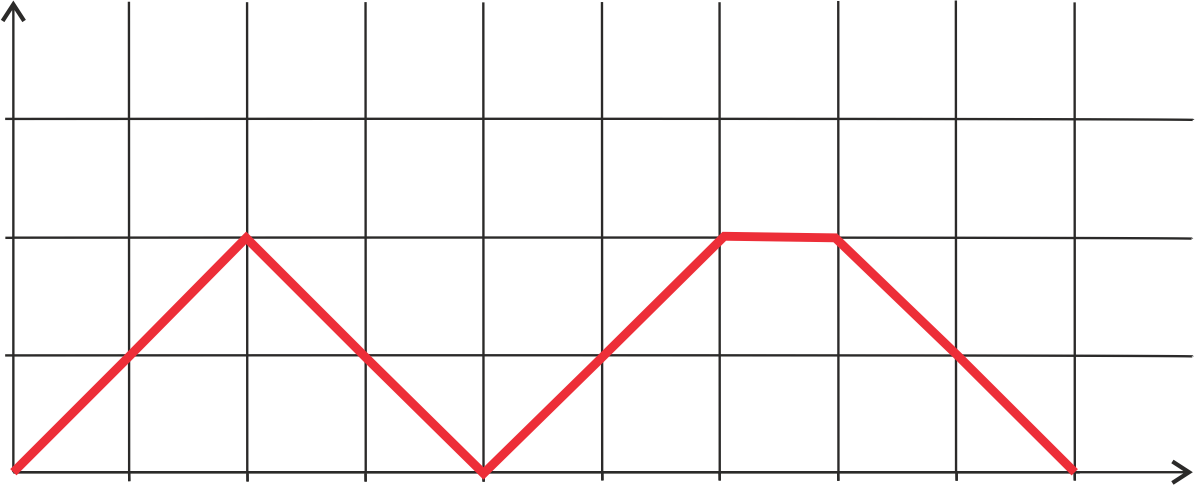}
		\caption{The Motzkin path $p=UUDDUUHDD$ in $\mathcal{M}_9$.}
		\label{Fig:Motzkin path 3}
	\end{center}
\end{figure}

Proposition \ref{prop:decompostion distance vector} along with Proposition \ref{prop:formula suma horizonatl strips} and Remark \ref{rem: generalization area}, give straightforwardly the following result.
\begin{theorem}\label{th:distance equals area}
	Consider the map $\Psi:\mathcal{D}(n) \rightarrow \mathcal{M}_n$ defined in Theorem \ref{th: the bijection}. Given a distance vector $v=(\delta_1 \dots, \delta_{n-1}) \in \mathcal{D}(n)$, we have that $A(\Psi(v))=\delta_1+\cdots+\delta_{n-1}$. In other words, the map $\Psi$ takes flag distance to area and the restriction $\Psi\mid_{\mathcal{D}(d,n)}:  \mathcal{D}(d,n) \rightarrow \mathcal{M}_n(d)$ is a bijection for any integer $d \in [0, \lfloor \frac{n^2}{4} \rfloor].$
\end{theorem}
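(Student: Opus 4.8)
The plan is to show the equality $A(\Psi(v)) = \delta_1 + \cdots + \delta_{n-1}$ for every $v \in \mathcal{D}(n)$ and then deduce the bijectivity of the restriction $\Psi\mid_{\mathcal{D}(d,n)}$ as an immediate consequence. The main work is the first identity, and I would reduce it to the elevated case already handled by Propositions \ref{prop:decompostion distance vector} and \ref{prop:formula suma horizonatl strips}. Concretely, write $v = (\delta_1, \dots, \delta_{n-1})$ with $\delta_0 = \delta_n = 0$ and let $0 = i_0 < i_1 < \cdots < i_s = n$ be the indices where $\delta_{i_t} = 0$ (the \emph{returns} of $\Psi(v)$ together with the endpoints). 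Each block $(\delta_{i_{t-1}+1}, \dots, \delta_{i_t - 1})$ has no null interior component, so by Proposition \ref{prop:elevated-disjoint} it corresponds under $\Psi$ to an elevated Motzkin path $p^{(t)} \in \mathcal{E}_{i_t - i_{t-1}}$, and $\Psi(v)$ is the concatenation $p^{(1)} p^{(2)} \cdots p^{(s)}$.

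Next I would invoke two additivity facts. On the combinatorial side, the area under a concatenation of Motzkin paths (each starting and ending on the $x$-axis) is the sum of the areas: $A(\Psi(v)) = \sum_{t=1}^{s} A(p^{(t)})$, since translating a block horizontally does not change the area of the region below it and above the axis. On the distance-vector side, the sum of components splits as $\sum_{i=1}^{n-1}\delta_i = \sum_{t=1}^{s} \sum_{i=i_{t-1}+1}^{i_t-1}\delta_i$ because the $\delta_{i_t}$ are zero. So it suffices to prove $A(p^{(t)}) = \sum_{i=i_{t-1}+1}^{i_t-1}\delta_i$ for a single elevated block. For that block, Proposition \ref{prop:decompostion distance vector} gives the "vertical" decomposition $\delta_i = \sum_{k=1}^{r} v_i^k$, and the remark after it gives $\sum_i \delta_i = \sum_{k=1}^r \rho_k$ with $\rho_k = \sum_i v_i^k$. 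Meanwhile Proposition \ref{prop:formula suma horizonatl strips} expresses $A(p^{(t)}) = \sum_{p_j = D} j - \sum_{p_i = U} i$ as a sum of areas $j - i$ over matched $U$/$D$ pairs, i.e.\ a sum of horizontal strips. The key observation, already illustrated in the examples, is that the strip at height exactly $k$ has total length $\rho_k$: the $U$/$D$ pairs whose strip sits at or above height $k$ are precisely those created where $\delta_i \ge k$, so level $k$ contributes $\rho_k = \#\{i : \delta_i \ge k\}$ to the area. Summing over $k = 1, \dots, r$ yields $A(p^{(t)}) = \sum_k \rho_k = \sum_i \delta_i$ over the block.

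For the second assertion, since $\Psi : \mathcal{D}(n) \to \mathcal{M}_n$ is a bijection by Theorem \ref{th: the bijection} and we have just shown $A(\Psi(v))$ equals the sum of components of $v$, the map $\Psi$ restricts to a bijection between $\{v \in \mathcal{D}(n) : \delta_1 + \cdots + \delta_{n-1} = d\} = \mathcal{D}(d,n)$ and $\{p \in \mathcal{M}_n : A(p) = d\} = \mathcal{M}_n(d)$. The admissible range of $d$ is $0 \le d \le D^n = \lfloor n^2/4 \rfloor$ by \eqref{eq: D^n}, which matches the range $0 \le k \le \lfloor n^2/4 \rfloor$ for nonempty $\mathcal{M}_n(k)$ noted in the earlier remark, so nothing is lost at the extremes.

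The step I expect to need the most care is pinning down that the horizontal strip at height $k$ has length exactly $\rho_k$. This is where one must be precise about the matching of $U$ and $D$ steps (Remark \ref{rem: parenthesis}) and about which steps are "active" at height $k$: a step $p_i$ lies on the sub-path at heights between $\delta_{i-1}$ and $\delta_i$, and a matched pair contributes to every integer level from $1$ up to the common height it reaches. I would make this rigorous by the vertical slicing of Proposition \ref{prop:decompostion distance vector}: the indicator $v_i^k = 1$ exactly when $\delta_i \ge k$, and the contribution of level $k$ to $A(p^{(t)})$ under the strip decomposition of Proposition \ref{prop:formula suma horizonatl strips} is $\sum_i v_i^k = \rho_k$. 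Everything else — the block decomposition along returns, area additivity of concatenations, and the reduction to the elevated case — is routine bookkeeping, and for the elevated pieces the two cited propositions do essentially all the work.
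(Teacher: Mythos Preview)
Your proposal is correct and follows essentially the same route as the paper: the paper's proof is the single sentence ``Proposition~\ref{prop:decompostion distance vector} along with Proposition~\ref{prop:formula suma horizonatl strips} and Remark~\ref{rem: generalization area} give straightforwardly the following result,'' and you unpack exactly those three ingredients (reduce along returns to elevated pieces, then match the horizontal-strip decomposition of the area with the layer decomposition $\sum_k \rho_k$). If anything, your write-up is more explicit than the paper's about why the level-$k$ strips have total length $\rho_k$; the only cosmetic wrinkle is that blocks between two consecutive zeros (a single $H$ on the $x$-axis) are not literally in $\mathcal{E}_1$, but they contribute zero to both sides and cause no trouble.
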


\begin{corollary}\label{cor:distance area sequence}
	The number of distance vectors in $\mathcal{D}(n)$ corresponding to a fixed flag distance value $d$ equals the number of Motzkin paths $p \in \mathcal{M}_n$ such that $A(p)=d$.  This number is the term $T(n,d)$  of the sequence $A129181 $ in \cite{Sloane}.
\end{corollary}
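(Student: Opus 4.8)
The plan is to deduce this statement directly from Theorem~\ref{th:distance equals area}. That theorem asserts that the map $\Psi$ restricts, for each integer $d \in [0, \lfloor n^2/4 \rfloor]$, to a bijection $\Psi\mid_{\mathcal{D}(d,n)} : \mathcal{D}(d,n) \to \mathcal{M}_n(d)$, where $\mathcal{M}_n(d)$ is the set of Motzkin paths of length $n$ and area $d$. So first I would simply invoke this bijection to conclude $|\mathcal{D}(d,n)| = |\mathcal{M}_n(d)|$: to every pair of full flags at flag distance $d$ corresponds a Motzkin path of area $d$, and vice versa, with no loss.

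Next, I would recall the definition of the integer $T(n,d)$ introduced in the remark preceding Table~\ref{Tab: T(n,k)}: by definition $T(n,d) = |\mathcal{M}_n(d)|$ is the number of Motzkin paths of length $n$ whose area equals $d$, and this is precisely the entry in position $(n,d)$ of the sequence $A129181$ of \cite{Sloane}. Chaining the two equalities yields $|\mathcal{D}(d,n)| = T(n,d)$, which is the claim. As a sanity check one may also observe that, since $\mathcal{D}(n)$ is the disjoint union of the sets $\mathcal{D}(d,n)$ over all admissible $d$, summing gives $\sum_{d} T(n,d) = |\mathcal{D}(n)| = M_n$, in agreement with the earlier Corollary and with the row sums of Table~\ref{Tab: T(n,k)}.

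There is essentially no obstacle here: the substantive content has already been packaged into Theorem~\ref{th:distance equals area}, and the present statement is a bookkeeping identification of our bijective image with a known integer sequence. The only point to state carefully is the range of $d$: the bijection of Theorem~\ref{th:distance equals area} is valid for $0 \le d \le \lfloor n^2/4 \rfloor = D^n$, which by~(\ref{eq: D^n}) is exactly the set of attainable flag distances in $\cF_q(n)$; for any $d$ outside this interval both $\mathcal{D}(d,n)$ and $\mathcal{M}_n(d)$ are empty, so $|\mathcal{D}(d,n)| = 0 = T(n,d)$ there as well, and no value of $d$ is lost or spurious.
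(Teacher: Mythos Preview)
Your proposal is correct and matches the paper's approach: the paper states this corollary immediately after Theorem~\ref{th:distance equals area} without a separate proof, treating it as the direct consequence of the bijection $\Psi\mid_{\mathcal{D}(d,n)}:\mathcal{D}(d,n)\to\mathcal{M}_n(d)$ together with the definition $T(n,d)=|\mathcal{M}_n(d)|$. Your additional sanity check on row sums and the remark on the range of $d$ are fine but not needed.
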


Observe that, as a consequence of Theorem \ref{th:distance equals area}, we could directly deduce the bound given in (\ref{eq: D^n}) and, by Theorem \ref{prop: allowed pattern}, conclude that it is sharp. It is enough to see that $\lfloor \frac{n^2}{4} \rfloor$ is the largest possible area under a Motzkin path with $n$ steps. This area is attained by the path $p=p_1p_2 \dots p_n$ with $p_1=\dots=p_k=U$ and $p_{k+1}= \dots =p_n=D$, if  $n$ is even ($n= 2k, k\geq 1)$, or with $p_1=\cdots=p_k=U$, $p_{k+1}=H$ and $p_{k+2}= \dots =p_n=D$,  if $n$ is odd ($n= 2k+ 1, k\geq 1)$.
\begin{remark}
	An analogous result to Theorem \ref{th:distance equals area} was proved also in \cite{Combinatorial}. In that paper, the authors define distance paths in a distance support and show that the area under a distance path corresponds to its associated value of the flag distance by counting lattice points in the path, or below it, and applying  Pick's Theorem.
\end{remark}

\subsection{The number of possible distance vectors of a flag code}
As a consequence of Theorem \ref{th:distance equals area}, we can easily compute the number of possible distance vectors that can be potentially associated to a full flag code with prescribed minimum distance.

\begin{definition}\label{def: distance vectors of a flag code}
	Given a full flag code $\cC\subseteq\cF_q(n)$,  its \emph{set of distance vectors} is defined as 
	$$
	\mathcal{D}(\cC) = \{ \textbf{d}(\cF, \cF') \ | \ \cF, \cF'\in\cC, \ d_f(\cF, \cF')=d_f(\cC)\}.
	$$
\end{definition}

\begin{example}\label{ex: dv d=2}
	Let $\{e_1, e_2, e_3, e_4\}$ be the standard basis of $\bbF_q^4$ as $\bbF_q$-vector space and consider the full flag code $\cC$ on $\bbF_q^4$ given by
	$$
	\begin{array}{ccccc} 
		\mathcal{F}^1 &=& (\left\langle e_1 \right\rangle, & \left\langle e_1, e_2 \right\rangle , & \left\langle e_1, e_2, e_4 \right\rangle),\\
		\mathcal{F}^2 &=& ( \left\langle e_1 \right\rangle , &  \left\langle e_1, e_3 \right\rangle, & \left\langle e_1, e_2, e_3 \right\rangle),\\
		\mathcal{F}^3 &=& ( \left\langle e_2 \right\rangle, & \left\langle e_2, e_3 \right\rangle , & \left\langle e_1, e_2, e_3 \right\rangle).
	\end{array}
	$$
Notice that 
	$$
	\begin{array}{ccccccccccc}
		d_f(\cF^1, \cF^2) & = & 0 &+& 1 &+& 1 & = & 2, \\
		d_f(\cF^1, \cF^3) & = & 1 &+& 1 &+& 1 & = & 3,\\
		d_f(\cF^2, \cF^3) & = & 1 &+& 1 &+& 0 & = & 2.
	\end{array}
	$$
	Hence, we have $d_f(\cC)=2$ and $\mathcal{D}(\cC)=\{ (0,1,1), (1,1,0)\}\subsetneq \mathcal{D}(2,4).$ On the other hand, there are more distance vectors in $\cD(2, 4)$. It suffices to apply Theorem \ref{prop: allowed pattern} and see that $(1,0,1)$ is an element in $\cD(2, 4).$
\end{example}
Directly from Corollary \ref{cor:distance area sequence} we can state the following results concerning full flag codes.
\begin{corollary}
	 Given a full flag code $\cC$ in $\bbF_q^n,$ the number of distance vectors in $\mathcal{D}(d_f(\cC), n)$ equals the number of Motzkin paths $p \in \mathcal{M}_n$ such that $A(p)=d_f(\cC)$, i.e., the value $T(n, d_f(\cC))$.  
\end{corollary}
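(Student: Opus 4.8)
The plan is to obtain this statement as the specialization of Corollary~\ref{cor:distance area sequence} to the single distance value $d=d_f(\cC)$, so almost all of the work has already been carried out in Theorem~\ref{th:distance equals area} and Corollary~\ref{cor:distance area sequence}.

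First I would observe that, since $\cC$ is a nonempty full flag code in $\cF_q(n)$, its minimum distance $d_f(\cC)$ is itself an attainable value of the flag distance; hence $0\leq d_f(\cC)\leq D^n=\lfloor n^2/4\rfloor$ by~(\ref{eq: D^n}). In particular $d_f(\cC)$ belongs to the interval $[0,\lfloor n^2/4\rfloor]$ for which Theorem~\ref{th:distance equals area} guarantees that the restriction $\Psi\mid_{\mathcal{D}(d_f(\cC),n)}\colon \mathcal{D}(d_f(\cC),n)\to\mathcal{M}_n(d_f(\cC))$ is a bijection, because $\Psi$ sends the flag distance $\delta_1+\cdots+\delta_{n-1}$ of a distance vector to the area of the associated Motzkin path.

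Next I would simply invoke Corollary~\ref{cor:distance area sequence}, which asserts that $|\mathcal{D}(d,n)|=|\mathcal{M}_n(d)|=T(n,d)$ for every admissible $d$, and apply it with $d=d_f(\cC)$. This yields $|\mathcal{D}(d_f(\cC),n)|=T(n,d_f(\cC))$, which is precisely the claim.

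There is no genuine obstacle here: the only point that requires a remark is the elementary one above, namely that $d_f(\cC)$ lies inside the range of attainable flag distances so that Theorem~\ref{th:distance equals area} and Corollary~\ref{cor:distance area sequence} apply verbatim. If one wished to add a sentence of context, one could also note that $\mathcal{D}(\cC)\subseteq\mathcal{D}(d_f(\cC),n)$, so $T(n,d_f(\cC))$ is an upper bound for the number of distance vectors actually realized by pairs of flags inside $\cC$ at minimum distance, as illustrated by Example~\ref{ex: dv d=2}.
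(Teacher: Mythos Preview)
Your proposal is correct and matches the paper's approach exactly: the paper states this corollary without a separate proof, noting that it follows ``directly from Corollary~\ref{cor:distance area sequence}'', which is precisely the specialization $d=d_f(\cC)$ you carry out. Your added remarks (that $d_f(\cC)\in[0,\lfloor n^2/4\rfloor]$ and that $\mathcal{D}(\cC)\subseteq\mathcal{D}(d_f(\cC),n)$) are correct and anticipate the paper's own subsequent Remark.
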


\begin{remark}
Notice that the previous result counts the number of possible distance vectors associated to the minimum distance of a flag code. Nevertheless, this number might not coincide with the exact number of distance vectors of the code. For instance, if we take $\cF^1$ and $\cC$ as in Example \ref{ex: dv d=2}, it holds $d_f(\cC)=2$ and $|\cD(\cC)|= 2.$ Moreover, if we consider $\cC'=\{\cF^1, \cF^4\},$ with $$\cF^4= ( \left\langle e_2 \right\rangle, \left\langle e_1, e_2 \right\rangle , \left\langle e_1, e_2, e_3 \right\rangle),$$ then we also have $d_f(\cC')=2$ but $|\cD(\cC')|=1$. Even more, the sets $\cD(\cC)$ and $\cD(\cC')$ do not share any distance vector and form a partition of $\cD(2, 4)$, which contains $T(4, 2)=3$ elements.

On the other hand, in the special case of a flag code on $\bbF_q^n$ with minimum distance $D^n$, we can give the exact number of its distance vectors (observe that $T(n, D^n)=1$), as stated in the next result.
\end{remark}

\begin{corollary}
Let $\cC$ be a full flag code in $\bbF_q^n$ such that $d_f(\cC)=D^n$. Then $\cC$ has a unique distance vector.
\end{corollary}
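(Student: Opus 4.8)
The plan is to combine the bijection of Theorem~\ref{th:distance equals area} with the bound on the components of a distance vector that was extracted inside the proof of Theorem~\ref{prop: allowed pattern}. First I would note that, since $n\geq 2$, we have $D^n=\lfloor n^2/4\rfloor\geq 1$, so a code $\cC$ with $d_f(\cC)=D^n$ cannot be a singleton: it has $|\cC|\geq 2$, and hence there is at least one pair $\cF,\cF'\in\cC$ with $d_f(\cF,\cF')=d_f(\cC)$. Thus $\mathcal{D}(\cC)$ is nonempty, and by Definition~\ref{def: distance vectors of a flag code} it satisfies $\mathcal{D}(\cC)\subseteq\mathcal{D}(D^n,n)$. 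It therefore suffices to prove that $\mathcal{D}(D^n,n)$ is a singleton.

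To see this, take any $v=(\delta_1,\dots,\delta_{n-1})\in\mathcal{D}(D^n,n)$ and recall from the proof of Theorem~\ref{prop: allowed pattern} that the conditions (\ref{eq: dist vect char}), together with $\delta_0=\delta_n=0$, force $\delta_i\leq\min\{i,n-i\}$ for every $i$. Since the sum of the components of $v$ is the flag distance $D^n$, we get
$$
D^n=\sum_{i=1}^{n-1}\delta_i\ \leq\ \sum_{i=1}^{n-1}\min\{i,n-i\}\ =\ \left\lfloor\frac{n^2}{4}\right\rfloor\ =\ D^n ,
$$
so equality holds throughout and necessarily $\delta_i=\min\{i,n-i\}$ for every $1\leq i\leq n-1$. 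Hence $v$ is uniquely determined. (This vector does satisfy (\ref{eq: dist vect char}): consecutive components differ by $\pm1$, except for a single flat step in the middle when $n$ is odd, so it is a genuine distance vector; equivalently, under $\Psi$ it is the image of the unique maximal-area Motzkin path of length $n$ described right after Corollary~\ref{cor:distance area sequence}.) One may also phrase this step purely combinatorially: by Corollary~\ref{cor:distance area sequence}, $|\mathcal{D}(D^n,n)|=T(n,D^n)$, and $T(n,D^n)=1$ because $D^n=\lfloor n^2/4\rfloor$ is the largest possible area of a Motzkin path of length $n$ and it is attained only by the staircase path.

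Putting the two steps together, $\mathcal{D}(\cC)$ is a nonempty subset of the singleton $\mathcal{D}(D^n,n)$, so $|\mathcal{D}(\cC)|=1$, which is the claim. I do not expect any genuine obstacle in this argument; the only point requiring a small amount of care is the nonemptiness of $\mathcal{D}(\cC)$, i.e. observing that $d_f(\cC)=D^n\geq 1$ already forces $\cC$ to contain at least two flags, so that the minimum distance is actually realised by some pair.
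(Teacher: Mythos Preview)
Your proof is correct and follows the paper's intended argument: the paper does not give a separate proof but signals in the preceding remark that $T(n,D^n)=1$, which together with Corollary~\ref{cor:distance area sequence} and the inclusion $\mathcal{D}(\cC)\subseteq\mathcal{D}(D^n,n)$ yields the result. You reproduce this route and, in addition, supply an explicit elementary justification of $|\mathcal{D}(D^n,n)|=1$ via the componentwise bound $\delta_i\leq\min\{i,n-i\}$ from Theorem~\ref{prop: allowed pattern}, as well as the nonemptiness check for $\mathcal{D}(\cC)$; these are welcome details that the paper leaves implicit.
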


We finish this section by computing the number of possible distance vectors that a disjoint full flag code can have.

\begin{proposition}
Let $\cC$ be a disjoint full flag code in  $\bbF_q^n$. Then $d_f(\cC)\geq n-1$ and the number of  possible distance vectors for $\cC$ is $T(n-2,d_f(\cC)-n+1)$.
\end{proposition}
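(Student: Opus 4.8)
The plan is to reduce the statement to the dictionary between distance vectors with no null component and elevated Motzkin paths that is already in place.

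First I would dispose of the bound. Since $\cC$ is disjoint, any two distinct flags $\cF,\cF'\in\cC$ are disjoint, so $\cF_i\neq\cF_i'$ and hence $\delta_i:=d_I(\cF_i,\cF_i')\geq 1$ for every $1\leq i\leq n-1$; summing over $i$ gives $d_f(\cF,\cF')=\sum_{i=1}^{n-1}\delta_i\geq n-1$, and minimizing over pairs yields $d_f(\cC)\geq n-1$. In the other direction, the explicit construction in the proof of Theorem \ref{prop: allowed pattern} shows that every vector in $\mathcal{D}(n)$ all of whose entries are positive is realized as $\bd(\cF,\cF')$ for some disjoint pair $\{\cF,\cF'\}$, i.e.\ as a distance vector of the (trivially disjoint) two-element flag code $\{\cF,\cF'\}$, whose minimum distance is the sum of those entries. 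Therefore the distance vectors that can occur in a disjoint full flag code of minimum distance $d:=d_f(\cC)$ are exactly the elements of $\mathcal{D}(d,n)$ with no null component, and it remains to count them.

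Next I would translate the count to Motzkin paths. By Proposition \ref{prop:elevated-disjoint}, the map $\Psi$ of Theorem \ref{th: the bijection} restricts to a bijection from the distance vectors of $\mathcal{D}(n)$ with no null component onto $\mathcal{E}_n$, and by Theorem \ref{th:distance equals area} it carries $\sum_i\delta_i$ to the area of the image path. Hence the number of admissible distance vectors for $\cC$ equals $|\{p\in\mathcal{E}_n : A(p)=d\}|$. The only genuinely computational point is the area identity for elevated paths: every $p\in\mathcal{E}_n$ factors uniquely as $p=U\,p'\,D$ with $p'\in\mathcal{M}_{n-2}$ — for $n\geq 2$ an elevated path must begin with $U$ and end with $D$ (an initial or final $H$ would produce a return), these two steps are matched to each other (the height first returns to $0$ only at the last step), and what lies strictly between them is again a balanced word. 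Reading the area through the horizontal-strip decomposition of Proposition \ref{prop:formula suma horizonatl strips} and Remark \ref{rem: generalization area}, the outermost matched pair $(p_1,p_n)$ contributes a strip of area $n-1$, while every other matched pair of $p$ is a matched pair of $p'$ shifted one unit to the right, contributing the same strip area as inside $p'$; so $A(p)=A(p')+(n-1)$.

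Putting this together, $A(p)=d$ for $p=U\,p'\,D\in\mathcal{E}_n$ if and only if $A(p')=d-n+1$, whence
$$|\{p\in\mathcal{E}_n : A(p)=d\}| = |\{p'\in\mathcal{M}_{n-2} : A(p')=d-n+1\}| = T(n-2,\,d-n+1),$$
which is $T(n-2,\,d_f(\cC)-n+1)$, as claimed. I expect the area identity $A(p)=A(p')+(n-1)$ to be the step needing the most care, chiefly because one applies the strip decomposition to $p'$ even when $p'$ has returns; if one prefers to sidestep this, the same identity follows either by integrating the piecewise-linear height profiles of $p$ and $p'$ directly, or by additivity of the area over the elevated pieces of $p'$ as noted in Remark \ref{rem: generalization area}.
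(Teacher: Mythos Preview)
Your proof is correct, but you take a different route from the paper. The paper stays entirely on the distance-vector side: it writes $(\delta_1,\dots,\delta_{n-1})=(1,\dots,1)+(\gamma_0,\dots,\gamma_{n-2})$ with $\gamma_i=\delta_{i+1}-1$, checks directly from Theorem~\ref{prop: allowed pattern} that $(\gamma_1,\dots,\gamma_{n-3})\in\cD(n-2)$ with associated distance $d_f(\cC)-(n-1)$, and then quotes the definition of $T(n-2,\cdot)$. You instead pass through $\Psi$ to elevated Motzkin paths, strip off the outer $U$ and $D$, and verify the area identity $A(Up'D)=A(p')+(n-1)$ via the horizontal-strip decomposition. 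Under $\Psi$ these two manoeuvres are literally the same map (subtracting $1$ from every height is exactly what removing the outer $UD$ pair does), so the bijections you and the paper produce coincide; the difference is purely in the bookkeeping. Your route is slightly longer because you must justify the area identity for a possibly non-elevated $p'$ (which you handle correctly via Remark~\ref{rem: generalization area}), while the paper's one-line index shift avoids any appeal to areas or strips. On the other hand, your argument makes the geometric picture explicit and shows clearly why the shift $d\mapsto d-(n-1)$ appears.
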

\begin{proof} 
First of all, notice that, if $\cC$ is disjoint, then its minimum distance is attained by distance vectors $(\delta_1, \dots, \delta_{n-1})$ with no zero components, i.e., $\delta_i\geq 1$, for every $1\leq i\leq n-1$. Hence, such vectors can be decomposed into
$$
(\delta_1, \dots, \delta_{n-1}) = (1, \dots, 1) + (\gamma_0, \gamma_1, \dots, \gamma_{n-3}, \gamma_{n-2})
$$
with $\gamma_i = \delta_{i+1}-1$, for every $0\leq i\leq n-2$. Notice that $\gamma_0=\gamma_{n-2} = 0$ and, for every $1\leq i\leq n-2$, it holds
$$
\gamma_i - \gamma_{i-1} = \delta_{i+1}- \delta_{i} \in\{ -1, 0, 1\}.
$$

As a consequence, and by means of Theorem \ref{prop: allowed pattern}, $(\delta_1, \dots, \delta_{n-1})$ can be naturally identified with the distance vector $(\gamma_1, \dots, \gamma_{n-3})\in\cD(n-2)$, associated to the distance value
$$
\sum_{i=1}^{n-3} \gamma_i = \sum_{i=1}^{n-3} (\delta_{i+1}-1)  = \sum_{i=0}^{n-2} (\delta_{i+1}-1) = d_f(\cC)-n+1.
$$
Hence, the number of distance vectors in $\cD(d_f(\cC), n)$ with no zero components, that is,  the number of potential distance vectors for $\cC$, coincides with the number $T(n-2,d_f(\cC)-n+1)$.
\end{proof}

In the next table we can see the number of possible distance vectors associated to a disjoint flag code $\cC$ on $\bbF_q^n$ with prescribed minimum distance $d$ for small values of $n$. 
\begin{table}[H]\label{Tab: T(n-2,d-n+1)}
		\centering
		\begin{footnotesize}
			\begin{tabular}{c|ccccccccccccccccc}
				\hline                
				  \backslashbox{$n$}{$d$} & $\textbf{0}$       & $\textbf{1}$ &  $\textbf{2}$ &  $3$ & $\textbf{4}$ & $5$ & $\textbf{6}$ & $7$ & $8$ & $\textbf{9}$ & $10$ & $11$ & $\textbf{12}$ & $13$ & $14$ & $15$ & $\textbf{16}$    \\ 
				\hline
			    2                  & -         & $1$ &      &      &     &     &     &     &     &     &      &      &      &      &      &      &     \\ 
			    3                  & -        & - &   $1$   &      &     &     &     &     &     &     &      &      &      &      &      &      &     \\ 
			    4                  & -         & - &   -   &   $1$    &   $1$    &    &     &     &     &     &      &      &      &      &   &  &     \\
			    5                  & -         & - &   -   &   -    &   $1$    & $2$    &  $1$   &     &     &     &   &      &   &    &   &  &     \\
			    6                  & -         & - &   -   &   -    &   -    & $1$   & $3$    & $3$    & $1$    & $1$  &  &   &   &  &   &  &     \\ 
			    7               & -    & - &   -   &   -    &   -    & -   & $1$    & $4$    & $6$    & $4$  & $3$  & $2$  & $1$&  &   &  &     \\ 
			    8  & -    & - &   -   &   -    &   -    & -   & -  & $1$  & $5$  & $10$  & $10$  & $8$  & $7$& $5$  & $3$   & $1$ & $1$     \\  
				\hline
			\end{tabular}
		\end{footnotesize}
		\caption{Numbers $ T(n-2, d-n+1)$ for small values of $n$.} 
	\end{table}

\subsection{Other cases, other sequences}

As it was done in Subsection \ref{Subsec: the disjoint case} with the case of disjoint flag codes, it can be very useful to interpret the properties of a given family of flags in terms of a concrete family of Motzkin paths in order to localize a relevant sequence of integers to count the number of associated distance vectors. To this end we contemplate a last family of flags.

\begin{proposition} Let  $\mathcal{H}_n$ be the set of full flags on $\bbF_q^n$ such that given $\cF, \cF' \in \mathcal{H}_n$, they never share consecutive subspaces. Hence, the number of possible distance vectors corresponding to couples of flags in $\mathcal{H}_n$ is given by the sequence of Riordan numbers, that is, sequence $A005043$ in \cite{Sloane}.	
\end{proposition}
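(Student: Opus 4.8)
The plan is to translate the condition ``$\cF$ and $\cF'$ do not share two consecutive subspaces'' into a condition on their distance vector, and then to push it through the bijection $\Psi\colon\mathcal{D}(n)\to\mathcal{M}_n$ of Theorem~\ref{th: the bijection}, where it will turn out to match exactly the defining property of Riordan paths.

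First I would record the elementary fact that, for full flags $\cF,\cF'$ on $\bbF_q^n$ and any $1\le i\le n-1$, the equality $\cF_i=\cF'_i$ holds if and only if the $i$-th component of $\bd(\cF,\cF')$ is zero. Indeed, by~(\ref{eq: subspace distance}), $d_I(\cF_i,\cF'_i)=i-\dim(\cF_i\cap\cF'_i)$, and this vanishes exactly when $\cF_i\cap\cF'_i$ has dimension $i$, i.e.\ when $\cF_i=\cF'_i$. Adopting as usual the convention $\delta_0=\delta_n=0$ (consistent with $\cF_0=\cF'_0=\{0\}$ and $\cF_n=\cF'_n=\bbF_q^n$), it follows that a pair $\cF,\cF'$ shares two consecutive subspaces if and only if its extended distance vector $(0,\delta_1,\dots,\delta_{n-1},0)$ has two consecutive zero entries. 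In particular, whether a pair shares consecutive subspaces depends only on its distance vector, so Theorem~\ref{prop: allowed pattern} (which guarantees that every pattern satisfying~(\ref{eq: dist vect char}) is realized by some pair of flags) shows that the set of distance vectors of couples of flags of $\mathcal{H}_n$ is precisely
$$
\{\,(\delta_1,\dots,\delta_{n-1})\in\mathcal{D}(n)\ :\ \text{$(0,\delta_1,\dots,\delta_{n-1},0)$ has no two consecutive zeros}\,\},
$$
equivalently, those $v\in\mathcal{D}(n)$ with $\delta_1=\delta_{n-1}=1$ and $\delta_i+\delta_{i+1}\ge 1$ for every $1\le i\le n-2$.

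Next I would apply $\Psi$. If $\Psi(v)=p_1\cdots p_n$, then by the description of the inverse map $\Phi$ in the proof of Theorem~\ref{th: the bijection} the height of the path after $j$ steps equals $\delta_j$ (it is $u_j-d_j$), and $p_i=H$ exactly when $\delta_i-\delta_{i-1}=0$. Hence the $i$-th step of $\Psi(v)$ is a horizontal step lying on the $x$-axis if and only if $\delta_{i-1}=\delta_i=0$; therefore $\Psi(v)$ has a horizontal step on the $x$-axis if and only if $(0,\delta_1,\dots,\delta_{n-1},0)$ contains two consecutive zeros. Combining this with the previous paragraph, $\Psi$ restricts to a bijection between the set of distance vectors coming from couples of flags of $\mathcal{H}_n$ and the set $\mathcal{R}_n$ of Riordan paths of length $n$. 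Since $|\mathcal{R}_n|=R_n$ (sequence $A005043$ in~\cite{Sloane}), the desired count follows.

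I do not anticipate a genuine obstacle: once the dictionary ``collapse at dimension $i$ $\Longleftrightarrow$ zero in the $i$-th slot of the distance vector'' is in place, the statement is just the translation of ``a Riordan path is a Motzkin path with no horizontal step on the $x$-axis'' to the distance-vector side, and the restriction of a bijection to matching subsets is automatically a bijection. The one point that must be handled explicitly is the role of the boundary slots $\delta_0,\delta_n$ together with the trivial subspaces $\{0\}$ and $\bbF_q^n$: reading ``sharing consecutive subspaces'' so as to include these is exactly what forbids a horizontal step $p_1$ or $p_n$ on the axis (equivalently, what forces $\delta_1=\delta_{n-1}=1$), and hence what makes the forbidden configuration coincide with the full Riordan condition rather than a weaker ``no interior horizontal step on the axis'' condition.
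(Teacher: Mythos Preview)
Your proof is correct and follows essentially the same route as the paper's: translate ``no consecutive collapses'' into ``no two consecutive zeros in the distance vector'' and push this through $\Psi$ to obtain exactly the Riordan condition. Your argument is in fact more careful than the paper's brief version, since you make explicit the boundary issue (that forbidding horizontal steps $p_1$ or $p_n$ on the axis amounts to including the trivial subspaces $\{0\}$ and $\bbF_q^n$ in the notion of ``consecutive subspaces''), which the paper leaves implicit.
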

\begin{proof}
	It is enough to observe that a couple of flags in  $\cF, \cF' \in \mathcal{H}_n$ never presents consecutive collapses, that is, their associated distance vector $\textbf{d}(\cF,\cF')$ never has two consecutive null components. Thus, the Motzkin path $\Psi(\textbf{d}(\cF,\cF'))$ does not have horizontal steps in the $x$-axis and, as consequence, it is a Riordan path.
\end{proof}
  
\section{Conclusions and open questions}\label{sec:conclusions}
In this paper we have addressed the problem of counting the number of distance vectors associated with the full flag variety $\mathcal{F}_q(n)$. Moreover, we have provided the number of possible distance vectors for a full flag code with prescribed minimum distance either in the general case or in the disjoint one.
The key to compute these cardinalities is to associate biunivocally a Motzkin word of length $n$ with a distance vector in $\cD(n)$. 

The problem of calculating the possible number of distance vectors when we consider flags of general type $t=(t_1, \dots, t_r)$ is still open. We believe that, to deal with this question, it could be useful the use of Motzkin paths of higher rank where other kind of steps are allowed (see \cite{Mansour}).

On the other hand, taking into account the definition of the distance in the context of multishot codes (\cite{NobUcho2009, NobUcho2010}), not necessarily flag codes, we think that our work can be a starting point in the study of the possible distributions of the total distance by using lattice paths. 

\section{Acknowledgements} We are very grateful to Paulo Almeida and Alessandro Neri for suggesting that we explore the unexpected relationship between Dyck-Motzkin paths and the distance paths we have introduced in our work \cite{Combinatorial}.


\begin{thebibliography}{99}
		
		\bibitem{AhlsCai00}
		R.\ Ahlswede, N.\ Cai, R.\ Li and R.\ W.\ Yeung,  
		\emph{Network Information Flow},
		IEEE Transactions on Information Theory, Vol. 46 (2000), 1204-1216.
		
		\bibitem{Aigner1998}
		M.\ Aigner,
		\emph{Motzkin Numbers},
		European Journal of Combinatorics, Vol. 19 (1998), 663-675.
		
		\bibitem{Combinatorial}
		C.\ Alonso-Gonz\'alez and M.A.\ Navarro-P\'erez,
		\emph{A Combinatorial Approach to Flag Codes}, 
		\url{https://arxiv.org/pdf/2111.15388} (preprint).
		
		\bibitem{cotas}
		C.\ Alonso-Gonz\'alez, M.A.\ Navarro-P\'erez and X.\ Soler-Escriv\`a, 
		\emph{Flag Codes: Distance Vectors and Cardinality Bounds}, 
		\url{https://arxiv.org/abs/2111.00910} (preprint).		
		
		\bibitem{CasoPlanar}
		C.\ Alonso-Gonz\'alez, M.A.\ Navarro-P\'erez and X.\ Soler-Escriv\`a, 
		\emph{Flag Codes from Planar Spreads in Network Coding}, 
		Finite Fields and Their Applications, Vol. 68 (2020),  101745.
		
	    \bibitem{CasoNoPlanar}
	    C.\ Alonso-Gonz\'alez, M.\ A.\ Navarro-P\'erez and X.\ Soler-Escriv\`a, 
		\emph{Optimum Distance Flag Codes from Spreads via Perfect Matchings in Graphs},
		Journal of Algebraic Combinatorics, Vol. 54 (2021), 1279–1297.
		
	    \bibitem{DonaSha1977}
		R.\ Donaghey  and L. W. \ Shapiro,
		\emph{Motzkin Numbers},
		Journal of Combinatorial Theory, Series A, Vol. 23 (1977), 291-301.
		
	  \bibitem{Dukes}
		M.\ Dukes,
		\emph{The Sandpile Model on the Complete Split Graph, Motzkin Words, and Tiered Parking Functions},
		Journal of Combinatorial Theory, Series A, Vol. 180 (2021), 105418.
		
				
	 	\bibitem{Guay-Petersen}
		 M.\ Guay-Paquet and K. \ Petersen,
	 	\emph{The Generating Function for Total Displacement},
	 	The Electronic Journal of Combinatorics, Vol. 21(3) (2014), P3.37.
		
		\bibitem{KoetKschi08}
		R.\ Koetter and F.\ Kschischang,  
		\emph{Coding for Errors and Erasures in Random Network Coding},
		IEEE Transactions on Information Theory, Vol. 54 (2008), 3579-3591.
		
		\bibitem{Kurz20} 
		S.\ Kurz,
		\emph{Bounds for Flag Codes},
		Designs, Codes and Cryptography, Vol. 89 (2021), 2759–2785.
		
		\bibitem{LiebNebeVaz18}
		D.\ Liebhold, G.\ Nebe and A.\ Vázquez-Castro,  
		\emph{Network Coding with Flags},
		Designs, Codes and Cryptography, Vol. 86 (2) (2018), 269-284.
		
		\bibitem{Mansour}
		T.\ Mansour,
		\emph{Motzkin Numbers of Higher Rank:
		Generating Function and Explicit Expression}, Journal of Integer Sequences, Vol. 10 (2007), Art.7.7.4.
		
		\bibitem{Motzkin}
		T.\ Motzkin,
		\emph{Relations between Hypersurface Cross Ratios, and a Combinatorial Formula for Partitions of a Polygon, for Permanent Preponderance, and for Non-Associative Products},
		Bulletin of the American Mathematical Society, Vol. 54 (1948), 352-360.
		
		\bibitem{NobUcho2009}
		R.\ W. \ N\'obrega and B.\ F.\ Uch\^oa-Filho,
		\emph{Multishot Codes for Network Coding: Bounds and a Multilevel Construction},
		in: 2009 IEEE International Symposium on Information Theory, Proceedings (ISIT), Seoul, South Korea, 2009, pp. 428–432.
		
		\bibitem{NobUcho2010}
		R.\ W. \ N\'obrega and B.\ F.\ Uch\^oa-Filho,
		\emph{Multishot Codes for Network Coding Using Rank-Metric Codes},
		in: 2010 Third IEEE International Workshop on Wireless Network Coding, Boston, USA, 2010, pp. 1–6.
		
		\bibitem{Sloane}
		N.\ J.\ A.\ Sloane, 
		\emph{The On-Line Encyclopedia of Integer Sequences},
	    \url{https://oeis.org/}.
	    
	    \bibitem{Stanley}
	    N.\ J.\ A.\ Sloane, 
	    \emph{Ennumerative Combinatorics}, Vol. 2, Cambridge University Press, 1999.
	    		
		\bibitem{TrautRosen18} 
		A.-L.\ Trautmann and J.\ Rosenthal,
		\emph{Constructions of Constant Dimension Codes},
		in: M.\ Greferath \textit{et al.} (Eds.), Network Coding and Subspace Designs, E-Springer International Publishing AG, 2018, pp. 25-42.
		
	\end{thebibliography}
\end{document}